\author[E.A.~Makedonskyi]{Ievgen Makedonskyi}
\address{
Faculty of Mechanics and mathematics, Kyiv Taras Shevchenko
University\\
Volodymyrska str., 64, Kyiv, 01033, Ukraine}
\email{makedonskyi.e@gmail.com}
\title[On noncommutative bases of the free module  $W_n(\mathbb K)$
] {On noncommutative bases of the free module  $W_n(\mathbb K)$ of
all $\mathbb K$-derivations of the polynomial ring in $n$
variables }
\keywords{Lie algebra, derivation, free module, polynomial ring,
noncommutative basis} \subjclass[2000]{17B40, 13E15}
\DeclareMathOperator\Der{Der}
\DeclareMathOperator\dive{div}
\DeclareMathOperator\ad{ad}
 \DeclareMathOperator\tr{tr}
\DeclareMathOperator\differential{d}
\renewcommand\d\differential
\let\leq\leqslant
\let\geq\geqslant
\let\star *
\let\supset\supseteq
\newtheorem{theorem}{Theorem}
\newtheorem{lemma}{Lemma}
\newtheorem{proposition}{Proposition}
\newtheorem{corollary}{Corollary}
\theoremstyle{definition}
\newtheorem{example}{Example}
\newtheorem{remark}{Remark}
\begin{document}

\sloppy

\begin{abstract}
 Let  $\mathbb{K}$ be an
algebraically closed field of  characteristic zero and
$R=\mathbb{K}[x_1,x_2,...x_n]$  the polynomial ring in  $n$
variables over $\mathbb K.$  We study bases of the free $R$-module
$W_n(\mathbb{K})$ of all  $\mathbb{K}$-derivations of the ring
$R$, such that their linear span over $\mathbb K$ is a subalgebra
of the Lie algebra  $W_n(\mathbb{K})$. We proved that for any Lie
algebra  $L$ of dimension  $n$ over $\mathbb{K}$ there exists a
subalgebra  $\overline{L}$ of $W_n(\mathbb{K})$ which is
isomorphic to $L$ and such that every  $\mathbb{K}$-basis of
$\overline L$ is an $R$-basis of the $R$-module
$W_n(\mathbb{K})$.

\end{abstract}

\maketitle

\section{Introduction}

    Let  $\mathbb{K}$ be an algebraically closed field of characteristic $0$
    and $W_n(\mathbb{K})$ the  Lie algebra of all  $\mathbb
    K$-derivations of the polynomial  $R=\mathbb{K}[x_1,x_2,...x_n]$ in  $n$
    variables over $\mathbb K.$
       The structure of the Lie algebra $W_n(\mathbb{K})$ and its subalgebras was studied by many authors
        (see, for example, \cite{Buh,Jordan,Petr}).
     One of the most important problem here is the question about
     structure of finite dimensional subalgebras of  $W_n(\mathbb{K})$.
      The description of all such subalgebras in  $W_1(\mathbb{K})$ and  $W_2(\mathbb{K})$
      in case of the field  $\mathbb K=\mathbb C$ of complex numbers can be easily
      obtained from the works of S.Lie \cite{Lie}.
      There is no such a description for the Lie algebra $W_n(\mathbb{K}), \ n\geq 3,$
      so it is of interest to study some classes of finite
      dimensional subalgebras of the algebra $W_n(\mathbb{K}).$

      Define one of such classes in the following way: a
      subalgebra $L$ of $W_n(\mathbb{K})$ of dimension $n$
      over $\mathbb{K}$ will be called {\emph{ basic,}} if every basis of
      the algebra $L$ over $\mathbb{K}$ is a basis of the
      $R$-module $W_n(\mathbb{K})$. It is obvious, that the Lie
      algebra $\mathbb K\langle \frac{\partial }{\partial x_1},\frac{\partial }{\partial x_2},..
      .\frac{\partial }{\partial x_n} \rangle$  is abelian and basic.
      All abelian basic Lie subalgebras in $W_n(\mathbb{K})$ are
      described in  \cite{Now1}  (see also \cite{Now}): let  $f_1,f_2,...f_n \in R$
       such that  $\det J(f_1,f_2,...f_n)\in {\mathbb{K}}^\star$, where $J(f_1,f_2,...f_n)$
       is the Jacoby matrix of polynomials   $ f_1,f_2,...f_n \in R.$ Then the derivations
        $D_{1}, \ldots ,D_{n}\in W_n(\mathbb{K})$ defined by the
        conditions
         $D_i(h)=\det J(f_1,...f_{i-1},h,f_{i+1},...f_n)$
        for any $h\in R$ pairwise commute and form a basis of the  $R$-module
         $W_n(\mathbb{K})$.  Conversely, every basis $\lbrace D_1,D_2,...D_n\rbrace$ such that
          $[D_i,D_j]=0, \ i, j=1, \ldots ,n$,
        can be obtained in such a way. But there exist also non-abelian basic
        subalgebras.
        For example, the two-dimensional subalgebra with a
        basis $\lbrace \frac{\partial}{\partial x_1}+x_2\frac{\partial}{\partial x_2},
        \frac{\partial}{\partial x_2}\rbrace$ from  $W_2(\mathbb{K})$ is basic and non-abelian.
     The main result of the paper, Theorem \ref{mainth}, shows that
     every Lie algebra of dimension $n$ over  $\mathbb K$ is isomorphic to a basic subalgebra
     in  $W_n(\mathbb{K})$. In Theorem \ref{nil}, all nilpotent basic
     subalgebras of the Lie algebra $W_n(\mathbb{K})$ are
     characterized.

     We use standard notation in the paper  (see, for example, \cite {Now}).
    The ground field  $\mathbb K$ is algebraically closed of characteristic $0.$
   The basis  $\lbrace \frac{\partial} {\partial x_1},\frac{\partial}{\partial x_2},...\frac{\partial
    }{\partial x_n} \rbrace$ of the free $R$-module $W_n(\mathbb{K})$ will be called standard.
    Every another basis  $\lbrace D_1,D_2, \ldots , D_n\rbrace$ can be obtained from the standard
    one by an invertible matrix  $A=\left(
                       \begin{array}{cccc}
                         a_{11} & a_{12} & \ldots & a_{1n} \\
                         a_{21} & a_{22} & \ldots & a_{2n} \\
                         \vdots & \vdots & \ddots & \vdots \\
                         a_{n1} & a_{n2} & \ldots & a_{nn} \\
                       \end{array}
                     \right)
$, where $D_i=a_{i1}\frac{\partial }{\partial x_1}+\ldots
+a_{in}\frac{\partial }{\partial x_n}$, $i=1, \ldots , n.$ If
$w_{1}, \ldots , w_{n}\in R=\mathbb K[x_{1}, \ldots , x_{n}]$ then
by $J(w_{1}, \ldots , w_{n})$ will be denoted the Jacoby matrix
 of these polynomials.

\section{On nilpotent basic subalgebras}

\begin{proposition} \label{qq}
 Let $L$ be a basic  Lie subalgebra in $W_n(\mathbb{K})$ and  $d$  an element of $L$.
 Then the trace of $d$  in $L$ by the  adjoint representation
  is equal to its divergence taken with the opposite sign, i.e. $\tr d = -\dive
  d.$
\end{proposition}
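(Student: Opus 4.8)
The plan is to reduce to a single basis vector and then compute. Both $\tr(\ad d|_L)$ and $\dive d$ are $\mathbb K$-linear in $d$, so it suffices to verify the identity for the elements of one fixed $\mathbb K$-basis $\{D_1,\dots,D_n\}$ of $L$. Write $A=(a_{ik})$ for the matrix with $D_i=\sum_k a_{ik}\,\partial_i$, where I abbreviate $\partial_i:=\partial/\partial x_i$. Since $L$ is basic, $\{D_1,\dots,D_n\}$ is also an $R$-basis of $W_n(\mathbb K)$, hence $A$ is invertible over $R$; because $R^{\star}=\mathbb K^{\star}$ this forces $\det A\in\mathbb K^{\star}$. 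The single crucial consequence of the basicness hypothesis is therefore that $D_i(\det A)=0$ for every $i$, which by Jacobi's formula for the derivative of a determinant can be written as $\sum_{j,k} b_{kj}\,D_i(a_{jk})=0$, where $B=(b_{kj})=A^{-1}$.

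Next I would read off the structure constants. From $[D_i,D_j]=\sum_k\bigl(D_i(a_{jk})-D_j(a_{ik})\bigr)\partial_k$ and $[D_i,D_j]=\sum_l\gamma_{ij}^{\,l}D_l$ with $\gamma_{ij}^{\,l}\in\mathbb K$, comparing coefficients of $\partial_k$ and multiplying by $B$ on the right gives $\gamma_{ij}^{\,l}=\sum_k\bigl(D_i(a_{jk})-D_j(a_{ik})\bigr)b_{kl}$. Consequently
\[
\tr(\ad D_i|_L)=\sum_j\gamma_{ij}^{\,j}=\sum_{j,k}b_{kj}\,D_i(a_{jk})-\sum_{j,k}b_{kj}\,D_j(a_{ik}).
\]
The first sum is zero by the identity of the previous paragraph. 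In the second, expand $D_j=\sum_m a_{jm}\partial_m$ and use $\sum_j a_{jm}b_{kj}=(BA)_{km}=\delta_{km}$; the sum collapses to $\sum_k\partial_k(a_{ik})=\dive D_i$. Hence $\tr(\ad D_i|_L)=-\dive D_i$, and linearity over $\mathbb K$ extends this to every $d\in L$.

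The only substantive input is the identity $D_i(\det A)=0$, and this is exactly where "basic" is used and where a general $n$-dimensional subalgebra would fail the conclusion — so I expect that to be the conceptual heart of the argument, everything else being bookkeeping with the cofactor matrix. Equivalently one can phrase the whole thing invariantly: the coframe $\theta^1,\dots,\theta^n$ dual to $D_1,\dots,D_n$ satisfies $\theta^1\wedge\cdots\wedge\theta^n=(\det A)^{-1}\,\d x_1\wedge\cdots\wedge\d x_n$, a constant multiple of the standard volume form $\omega$; for $d\in L$ one gets $(\mathcal L_d\theta^i)(D_j)=-\theta^i([d,D_j])$, whence $\mathcal L_d(\theta^1\wedge\cdots\wedge\theta^n)=-\tr(\ad d|_L)\,\theta^1\wedge\cdots\wedge\theta^n$, while $\mathcal L_d\omega=(\dive d)\,\omega$; comparing the two expressions for $\mathcal L_d$ of the same form yields $\tr d=-\dive d$.
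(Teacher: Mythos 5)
Your proof is correct and follows essentially the same route as the paper: the paper also reduces to a $\mathbb K$-basis, notes that basicness forces $\det A\in\mathbb K^{\star}$ so $D_i(\det A)=0$, and then invokes the trace--divergence relation $\tr(\ad D_i)=D_i(\det A)-\dive D_i$ (cited from Buchstaber--Leykin, Lemma 2.3) rather than deriving it. Your only difference is that you prove that relation directly via Jacobi's formula instead of citing it, which makes the argument self-contained but not conceptually different.
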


\begin{proof}
Let $\lbrace D_1,D_2,...D_n\rbrace$ be an arbitrary basis of the
Lie algebra $L$ over $\mathbb{K}$. Let $D_i=a_{i1}\frac{\partial
}{\partial x_1}+\ldots +a_{in}\frac{\partial }{\partial x_n}$ be a
decomposition of $D_i$ in standard basis, $a_{ij}\in
R=\mathbb{K}[x_1,x_2,...x_n]$. Since $\lbrace
D_1,D_2,...D_n\rbrace$ is a basis of the free $R$-module
$W_n(\mathbb{K})$, it holds obviously $\Delta =\det (a_{ij}) \in
\mathbb{K}^\star$. By Lemma 2.3 from \cite {Buh}, the divergences
${\dive} D_i=\sum_{j=1}^{n}{\frac{\partial a_{ij}}{ {\partial
x_j}}}$ and traces ${\rm{tr}}( {\ad}
D_i)=\sum_{j=1}^{n}{c_{ij}^j}$ satisfy the following relation:
${\rm{tr}}( {\ad} D_i)=D_i(\Delta)- {\dive} D_i.$ Since
$D_{i}(\Delta )=0$ we have  ${\rm{tr}}( {\ad} D_i)=-{\dive} D_i$.
The element $d$ is a linear combination of $D_{i}$ with
coefficients in $\mathbb K$ therefore $\tr d = -\dive
  d.$
\end{proof}

\begin{corollary}
If $L$ is a semisimple or nilpotent basic subalgebra of
$W_n(\mathbb{K})$ then  $L \in SW_n(\mathbb{K})$, where
$SW_n(\mathbb{K})$ is the Lie algebra of all  $D\in
W_n(\mathbb{K})$ such that $\dive D =0$.
\end{corollary}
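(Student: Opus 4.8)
The corollary follows immediately from Proposition \ref{qq} once we recall two standard facts about finite-dimensional Lie algebras. Let me sketch the plan.

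The plan is to apply Proposition \ref{qq} elementwise and then invoke the classification of trace forms. First I would note that Proposition \ref{qq} gives, for every $d \in L$, the identity $\tr(\ad d) = -\dive d$. So it suffices to show that $\tr(\ad d) = 0$ for all $d$ in the two cases.

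For the semisimple case: if $L$ is semisimple, then $L = [L,L]$, so every element $d$ is a sum of brackets $[a,b]$; since $\ad[a,b] = [\ad a, \ad b]$ and the trace of a commutator of operators vanishes, $\tr(\ad d) = 0$. (Alternatively, one can cite that semisimple Lie algebras are unimodular, or that the adjoint representation of a semisimple algebra factors through $\mathfrak{sl}$.) For the nilpotent case: if $L$ is nilpotent, then $\ad d$ is a nilpotent operator for every $d \in L$ by definition (or by Engel's theorem), hence $\tr(\ad d) = 0$. In either case $\dive d = -\tr(\ad d) = 0$, so $d \in SW_n(\mathbb{K})$, which is exactly the statement $L \subseteq SW_n(\mathbb{K})$.

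There is no real obstacle here — the content is entirely in Proposition \ref{qq}, and the two Lie-theoretic facts (commutators have zero trace; nilpotent operators have zero trace) are elementary. The only point requiring a word of care is that the proposition was stated for elements $d$ of a \emph{basic} subalgebra, and "basic" was defined to have dimension exactly $n$; so one should remark that the hypothesis of the corollary already assumes $L$ is basic (hence $n$-dimensional), and that the traces $\tr(\ad d)$ are computed within $L$ itself, which is precisely how Proposition \ref{qq} phrases it. With that understood, the argument is two lines in each case.
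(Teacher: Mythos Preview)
Your proposal is correct and is exactly the argument the paper has in mind: the corollary is stated immediately after Proposition~\ref{qq} with no proof, because it follows at once from $\tr(\ad d)=-\dive d$ together with the elementary facts that $\tr(\ad d)=0$ when $L=[L,L]$ (semisimple case) or when $\ad d$ is nilpotent (nilpotent case).
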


\begin{lemma}~\label{main}
Let $\lbrace D_1,D_2,...D_n\rbrace$ be an arbitrary basis of the
$R$-module $W_n(\mathbb{K})$ and  $\left[D_i,D_j
\right]=\sum_{k=1}^{n}{c_{ij}^kD_k}$ for some $c_{ij}^k\in
R=\mathbb{K}[x_1,x_2,...x_n]$. Write down  $\frac{\partial
}{\partial x_i}=\sum_{j=1}^n {b_{ij}D_j}$ for all $i=1, \ldots n$,
where $b_{ij} \in R$. Then

\begin{equation}
\sum_{i,j=1}^n {b_{pi}b_{qj}c_{ij}^k}+\frac{\partial
b_{qk}}{\partial x_p}-\frac{\partial b_{pk}}{\partial x_q}=0
\label{mainequ}
\end{equation}
for any $p,q,k=1,\ldots ,n$.

    Conversely, let $\lbrace D_1,D_2,...D_n\rbrace$ be a basis of the $R$-module $W_n(\mathbb{K})$.
    Define the elements $b_{ij}$ from the equations
    $\frac{\partial }{\partial x_i}=\sum_{j=1}^n {b_{ij}D_j}$.
    If these elements satisfy relations ~(\ref{mainequ}) for some $c_{ij}^k  \in R$, then $\left[D_i,D_j
     \right]=\sum_{k=1}^{n}{c_{ij}^kD_k}$.

\end{lemma}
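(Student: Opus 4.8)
The plan is to read (\ref{mainequ}) off from the triviality $[\partial/\partial x_p,\partial/\partial x_q]=0$ after rewriting both coordinate derivations in the basis $\{D_1,\dots,D_n\}$, and to deduce the converse from the uniqueness of the expansion of a derivation over this basis. Recall the Leibniz identity for brackets of derivations: for $f,g\in R$ and $X,Y\in W_n(\mathbb K)$ one has $[fX,gY]=fg\,[X,Y]+f\,X(g)\,Y-g\,Y(f)\,X$. Applying it to $\partial/\partial x_p=\sum_i b_{pi}D_i$ and $\partial/\partial x_q=\sum_j b_{qj}D_j$, using bilinearity of the bracket and the relations $[D_i,D_j]=\sum_k c_{ij}^kD_k$, I get
\[
0=\Bigl[\frac{\partial}{\partial x_p},\frac{\partial}{\partial x_q}\Bigr]
=\sum_{i,j}b_{pi}b_{qj}\sum_{k}c_{ij}^{k}D_k+\sum_{i,j}b_{pi}D_i(b_{qj})D_j-\sum_{i,j}b_{qj}D_j(b_{pi})D_i .
\]
In the last two sums the inner summations collapse, since $\sum_i b_{pi}D_i=\partial/\partial x_p$ and $\sum_j b_{qj}D_j=\partial/\partial x_q$ give $\sum_i b_{pi}D_i(b_{qk})=\partial b_{qk}/\partial x_p$ and $\sum_j b_{qj}D_j(b_{pk})=\partial b_{pk}/\partial x_q$. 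Collecting the coefficient of each $D_k$ yields
\[
0=\sum_{k}\Bigl(\sum_{i,j}b_{pi}b_{qj}c_{ij}^{k}+\frac{\partial b_{qk}}{\partial x_p}-\frac{\partial b_{pk}}{\partial x_q}\Bigr)D_k ,
\]
and because $\{D_1,\dots,D_n\}$ is an $R$-basis every coefficient vanishes, which is exactly (\ref{mainequ}).

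For the converse, write $[D_i,D_j]=\sum_k\tilde c_{ij}^{k}D_k$ for the (unique) decomposition of the bracket in the basis $\{D_k\}$. By the direction just proved, the $b_{ij}$ satisfy (\ref{mainequ}) with $\tilde c_{ij}^{k}$ in place of $c_{ij}^{k}$; subtracting these from the relations assumed to hold for the given $c_{ij}^{k}$ leaves $\sum_{i,j}b_{pi}b_{qj}(c_{ij}^{k}-\tilde c_{ij}^{k})=0$ for all $p,q,k$. Put $B=(b_{ij})$ and, for fixed $k$, $E_k=(c_{ij}^{k}-\tilde c_{ij}^{k})$; then these equations say $BE_kB^{T}=0$. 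Now $B$ is invertible over $R$: if $D_i=\sum_j a_{ij}\partial/\partial x_j$ and $A=(a_{ij})$, then $BA=I$, while $\det A\in\mathbb K^{\star}$ because $\{D_i\}$ is an $R$-basis, so $\det B\in\mathbb K^{\star}$ too. Hence $E_k=0$ for every $k$, i.e. $c_{ij}^{k}=\tilde c_{ij}^{k}$, and therefore $[D_i,D_j]=\sum_k c_{ij}^{k}D_k$.

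The argument is essentially a bookkeeping computation; the points that deserve care are the correct use of the Leibniz rule — keeping straight which derivation differentiates which coefficient — and, for the converse, the observation that the matrix $B$, inverse to the matrix of $D_1,\dots,D_n$ in the standard basis (whose determinant is a nonzero constant), is invertible over $R$. It is exactly this invertibility that forces the hypothesised structure constants to coincide with the true ones.
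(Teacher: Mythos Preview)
Your proof is correct and follows essentially the same route as the paper's. The forward direction is the identical computation---expand $[\partial/\partial x_p,\partial/\partial x_q]=0$ in the basis $\{D_k\}$ via the Leibniz rule and read off the coefficients---and for the converse both arguments introduce the true structure constants $\tilde c_{ij}^k$, subtract, and use invertibility of $B=(b_{ij})$ over $R$; the paper phrases this as the coefficient matrix of the linear system being the tensor square $B\otimes B$ with determinant $(\det B)^{2n}\in\mathbb K^\star$, which is just the vectorized form of your $BE_kB^{T}=0$.
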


\begin{proof}
Using commutativity of the standard basis, we get:
\begin{equation}
0=\frac{\partial }{\partial x_p}\frac{\partial }{\partial x_q}-\frac{\partial
}{\partial x_q}\frac{\partial }{\partial x_p}=
\frac{\partial }{\partial x_p} \sum_{j=1}^n {b_{qj}D_j} -\frac{\partial
}{\partial x_q} \sum_{j=1}^n {b_{pj}D_j}
\label{q2}
\end{equation}
for any  $p,q=1,\ldots ,n$.

Observe that $\frac{\partial }{\partial x_p} \sum_{j=1}^n
{b_{qj}D_j}=
 \sum_{j=1}^n {b_{qj} \frac{\partial }{\partial
 x_p} D_j}+\sum_{j=1}^n {\frac{\partial b_{qj}}{\partial x_p} D_j},$ analogously,
  $\frac{\partial} {\partial x_q} \sum_{j=1}^n {b_{pj}D_j}=
 \sum_{j=1}^n {b_{pj} \frac{\partial}{\partial x_q} D_j}+\sum_{j=1}^n {\frac{\partial b_{pj}}{\partial x_q }D_j}$. If we
 combine this with ~(\ref{q2}), we have the relation
 \[0=\sum_{j=1}^n {b_{qj} \frac{\partial }{\partial x_p} D_j}-\sum_{i=1}^n {b_{pi}
 \frac{\partial }{\partial x_q} D_i}+\sum_{k=1}^n{\left(\frac{\partial b_{qk} }{\partial x_p} -\frac{\partial b_{pk} }{\partial x_q }\right)D_k}.\]
 Substituting $\sum_{j=1}^n {b_{pj}D_j}$ and  $\sum_{j=1}^n {b_{qj}D_j}$
 instead of $\frac{\partial }{\partial x_p}$ and respectively  $\frac{\partial }{\partial x_q}$,
 we get the equality:
  \[0=\sum_{j=1}^n {b_{qj} \left(\sum_{i=1}^n b_{pi}D_i\right) D_j}-\sum_{i=1}^n {b_{pi}
  \left(\sum_{j=1}^n b_{qj}D_j\right) D_i}+\sum_{k=1}^n{\left(\frac{\partial b_{qk} }{\partial x_p}
   -\frac{\partial b_{pk} }{\partial x_q} \right)D_k}.\]
It is easy to see that
\[ \sum_{j=1}^n {b_{qj} \left(\sum_{i=1}^n b_{pi}D_i\right) D_j}-\sum_{i=1}^n {b_{pi}
  \left(\sum_{j=1}^n b_{qj}D_j\right) D_i}=\sum _{i,
  j=1}^{n}b_{pi}b_{qj}\ [D_{i}, D_{j}]. \]

 Combining this with the decomposition $\left[D_i,D_j \right]=\sum_{k=1}^n c_{ij}^k D_k$,
 we obtain the  relation:
\[ \sum_{i,j,k=1}^n {b_{pi}b_{qj}c_{ij}^k}D_k+\sum_{k=1}^n({\frac{\partial
b_{qk}}{\partial x_p}-\frac{\partial b_{pk}}{\partial
x_q}})D_{k}=0.
\]

The derivations $\lbrace D_1,D_2,...D_n\rbrace$ are linearly
independent over $R$, therefore we have relation  (1). This
completes the proof of the first part of our statement.

To prove the second part of the Lemma we define elements $b_{ij} $
by the next relations:
\[\frac{\partial
}{\partial x_i}=\sum_{j=1}^n {b_{ij}D_j}\ \ , i, j=1, \ldots ,
n.\] Suppose the elements $c_{ij}^k$ satisfy relations
~(\ref{mainequ}). We have $\left[D_i,D_j \right]=\sum
_{k=1}^{n}{\gamma_{ij}^k D_k}$ for some $\gamma_{ij}^k\in R$,
because $\lbrace D_1,D_2,...D_n\rbrace$ is a basis of the
$R$-module $W_n(\mathbb{K}).$ By the first part of this Lemma the
elements $\gamma_{ij}^k$ satisfy the relations ~(\ref{mainequ}),
that is, $\sum_{i,j=1}^n
{b_{pi}b_{qj}\gamma_{ij}^k}+\frac{\partial b_{qk}}{\partial
x_p}-\frac{\partial b_{pk}}{\partial x_q}=0$. The system
~(\ref{mainequ})  can be regarded as a linear system in $n^{3}$
variables $c_{ij}^k$. This system can be decomposed into a direct
sum of $n$ linear systems:
\begin{equation}  \sum_{i,j=1}^n {b_{pi}b_{qj}c_{ij}^k}+\frac{\partial
b_{qk}}{\partial x_p}-\frac{\partial b_{pk}}{\partial x_q}=0, \ \
k \  \mbox{is fixed,} \ \  k=1 \ldots n. \  \label{equk}
\end{equation}
 Let us prove that the system  (\ref{equk}) has a unique solution. It
is easy to see, that this system has the following matrix
                    $$\left(
                       \begin{array}{ccccccccc}
                         b_{11}b_{11} & b_{11}b_{12} & \ldots & b_{11}b_{1n}& \ldots & b_{1n}b_{11} & b_{1n}b_{12} & \ldots & b_{1n}b_{1n}\\
                         b_{11}b_{21} & b_{11}b_{22} & \ldots & b_{11}b_{2n}& \ldots &b_{1n}b_{21} & b_{1n}b_{22} & \ldots & b_{1n}b_{2n}\\
                         \vdots & \vdots & \ddots & \vdots & & \vdots & \vdots & \ddots & \vdots \\
                         b_{11}b_{n1} & b_{11}b_{n2} & \ldots & b_{11}b_{nn}& \ldots &b_{1n}b_{n1} & b_{1n}b_{n2} & \ldots & b_{1n}b_{nn}\\
                         \vdots & \vdots &  & \vdots & & \vdots& \vdots &  & \vdots \\
                         b_{n1}b_{n1} & b_{n1}b_{n2} & \ldots & b_{n1}b_{nn}& \ldots &b_{nn}b_{n1} & b_{nn}b_{n2} & \ldots & b_{nn}b_{nn}\\
                       \end{array}
                     \right).$$
                     Because this  matrix is obviously the  tensor
                     square $\left(b_{ij}\right)\otimes\left(b_{ij}\right)$ of the matrix
$\left(b_{ij}\right)$ and  the  determinant $\det (b_{ij})$
 is invertible (because $(b_{ij})$ is a transition matrix between two bases),
it holds
$\det\left(\left(b_{ij}\right)\otimes\left(b_{ij}\right)\right)=(\det
(b_{ij}))^{2n}\in \mathbb K^{\star}$.  Therefore, the system
(\ref{equk}) has the unique solution $\gamma_{ij}^k =c_{ij}^k$
\end{proof}
Now let $L$ be an arbitrary $n$-dimensional Lie algebra over the
field $\mathbb{K}$. Take a basis   $\{ l_1, \ldots , l_n \}$  of
algebra $L$ and  denote by $c_{ij}^k$  the structure constants of
$L$ in this basis, that is
 $\left[l_i,l_j\right]=\sum_{k=1}^n{c_{ij}^k l_k}$.
It is well known that the tensor product  $R\otimes _{\mathbb K}L$
of an associative and commutative $\mathbb K$-algebra $R$ and the
Lie algebra $L$ is a Lie algebra over the field $\mathbb K.$
Further,  we will always denote  by $R$ the polynomial algebra
$\mathbb K[x_{1}, \ldots , x_{n}].$ Since the elements of the
algebra $R\otimes _{\mathbb K}L$ are of the form
$\sum_{i=1}^n(f_i\otimes l_i), $ $f_{i}\in R, \ i=1, \ldots ,n,$
the tensor product  $R\otimes _{\mathbb K}L$ is a free module of
rank $n$ over the ring $R$. The elements $\{ 1\otimes l_{1},
\ldots , 1\otimes l_{n}\}$ form obviously a basis of this module.
Using the multiplication law in $L$, we get the equality
\begin{equation}
\left[\overline{f},\overline{g}\right]=\sum_{k=1}^n
{\left(\sum_{i,j=1}^n{c_{ij}^k f_i g_j}\right)\otimes l_k}.
\label{defmult}
\end{equation}
for any elements
 $\bar{f}=\sum_{i=1}^n f_i\otimes l_i, \ \bar{g}=\sum_{i=1}^n g_i\otimes l_i
 \in R\otimes _{\mathbb K}L.$

For an arbitrary element $\overline{f}=\sum_{i=1}^n f_i\otimes l_i
\in R\otimes _{\mathbb K}L$  and an arbitrary index $p=1, \ldots ,
n$ define $\frac{\partial \overline{f}}{\partial x_p}=\sum_{i=1}^n
{\frac{\partial f_i}{\partial x_p}\otimes l_i}$. It is easy to see
that the map $\overline{f}\mapsto \frac{\partial
\overline{f}}{\partial x_p}$ is a derivation of the Lie algebra
$R\otimes _{\mathbb K}L$ (we will  denote this map also  by
$\frac{\partial }{\partial x_p}$). Since the derivation
$\frac{\partial }{\partial x_p}$ acts on the coordinates $f_{i}$
of the element $\overline{f}=\sum_{i=1}^n f_i\otimes l_i$, it
holds $\frac{\partial }{\partial x_p}\frac{\partial }{\partial
x_q}-\frac{\partial }{\partial x_q}\frac{\partial }{\partial
x_p}=0$ for arbitrary $p,q=1, \ldots ,n.$
  Denote by $A$ the abelian Lie subalgebra  of $\Der (R\otimes _{\mathbb K}L)$ with
the basis $\lbrace \frac{\partial }{\partial x_1},\frac{\partial
}{\partial x_2},...\frac{\partial }{\partial x_n} \rbrace$  and by
$\widehat L$ the  subalgebra $\widehat L=A+R\otimes _{\mathbb K}L$
of the semidirect product of Lie algebras $\Der (R\otimes
_{\mathbb K}L)\rightthreetimes R\otimes _{\mathbb K}L$.

\begin{remark}\label{equiv}

Let $L$ be a basic subalgebra of the Lie algebra
$W_n(\mathbb{K})$. Then the equations (\ref{mainequ}) are
equivalent to the following relations in the Lie algebra $\widehat
L$:
\begin{equation}
\left[\overline{b_p},\overline{b_q}\right]+\left[
\frac{\partial}{\partial x_p},
\overline{b_q}\right]-\left[\frac{\partial}{\partial x_q},
\overline{b_p}\right]=0. \label{main1}
\end{equation}

Since $\left[\frac{\partial}{\partial
x_p},\frac{\partial}{\partial x_q}\right]=0$,  we can rewrite
relations (\ref{main1}) as the following relations in the Lie
algebra $\widehat L$
\begin{equation}
\left[\frac{\partial}{\partial x_p}+\overline{b_p},
\frac{\partial}{\partial x_q}+\overline{b_q}\right]=0.
\label{main2}
\end{equation}

\end{remark}
Let $L$ be a nilpotent Lie algebra over the field $\mathbb K$ with
$\dim L_{\mathbb K}=n.$  By Engel's theorem, $L$ has a flag of
ideals $L=L_0\supset L_1\supset...\supset L_{n-1}\supset
L_n=\lbrace 0 \rbrace$. Take any elements  $l_i \in
L_{i-1}\backslash L_i, \  i=1, \ldots , n$ and consider the Lie
algebra $\widehat L$ constructed in such a way as it was mentioned
above.
 The structure constants of $L$ in the basis
$\{ l_{1}, \ldots , l_{n}\}$ satisfy the relations
\begin{equation}
 c_{ij}^k=0, \ \mbox{if}  \ k\leq\max(i,j)\label{n}.
 \end{equation}

Since the Lie algebra $L$ is nilpotent, the tensor product
$R\otimes _{\mathbb K}L$ is also nilpotent. Then for any element
$\overline{w}\in R\otimes _{\mathbb K}L$ the inner derivation
${\ad} \overline{w}$ of the Lie algebra $\widehat L$ is nilpotent.
We collect some properties of the Lie algebras  $R\otimes
_{\mathbb K}L$  and  $\widehat L$ in the following Lemma.

\begin{lemma}\label{properties}
Let $L$ be a nilpotent  Lie algebra of dimension $n$ over the
field $\mathbb K$. Let $\{ l_{1}, \ldots , l_{n}\}$ be a basis of
$L$ and  $\overline{w}=\sum_{i=1}^n {w_{i}\otimes l_i}$ be an
arbitrary element of $R\otimes _{\mathbb K}L$. Then it holds

{\rm (1)}  $\ad \overline{w}$ is a nilpotent endomorphism of the
$R$-module $R\otimes _{\mathbb K}L$, that is $\ad
\overline{w}(f\overline u)=f\ad \overline{w}(\overline u)$ for any
$f\in R$ and $\overline u=\sum_{i=1}^n {u_{i}\otimes l_i}\in
R\otimes _{\mathbb K}L$;

{\rm (2)} $\varphi =\sum_{i=1}^{\infty}\frac{1}{i!}
\left({\ad} \
    \overline w\right)^{i-1}$  is an automorphism of the $R$-module
$R\otimes _{\mathbb K}L$;

{\rm (3)} if $\overline{w}=\sum_{i=1}^n {w_{i}\otimes l_i}$ is an
element of $R\otimes _{\mathbb K}L$ with the property $\det
J(w_{1}, \ldots , w_{n})=c\in \mathbb{ K}^{\star}$, then the set
of elements
$$\overline{b_p}=\sum_{i=1}^{\infty}{\frac{1}{i!}
    \left({\ad} \
    \overline w\right)^{i-1}\left(\frac{\partial w}{\partial x_p}\right)}, \ p=1, \ldots
    ,n$$
    is a basis of the free $R$-module $R\otimes _{\mathbb K}L.$
    In particular, writing $\overline{b_{p}}=\sum_{i=1}^n {b_{pi}\otimes l_i}$
    we have $\det (b_{ij})_{i,j=1}^n=c\in     \mathbb{K}^{\star}.$
 \end{lemma}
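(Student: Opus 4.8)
\medskip

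The plan is to treat the three statements in order, since each feeds into the next. For part (1), the key observation is that $\overline w\in R\otimes_{\mathbb K}L$ is an element of the ideal $R\otimes_{\mathbb K}L$ of $\widehat L$, so $\ad\overline w$ restricts to an endomorphism of $R\otimes_{\mathbb K}L$. Its $R$-linearity is immediate from the multiplication formula (\ref{defmult}): if $\overline u=\sum u_i\otimes l_i$ and $f\in R$ then $[\overline w, f\overline u]=\sum_k\big(\sum_{i,j}c_{ij}^k w_i\,(fu_j)\big)\otimes l_k=f\,[\overline w,\overline u]$ because $R$ is commutative. Nilpotency of $\ad\overline w$ as an $R$-module endomorphism then follows from nilpotency of the Lie algebra $R\otimes_{\mathbb K}L$ (which is nilpotent because $L$ is and the lower central series of $R\otimes_{\mathbb K}L$ is obtained by applying $R\otimes_{\mathbb K}(-)$ to that of $L$): there is an $N$ with $(\ad\overline w)^N=0$ identically on $R\otimes_{\mathbb K}L$.

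\medskip

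For part (2), since $\ad\overline w$ is a nilpotent $R$-linear endomorphism, the sum $\varphi=\sum_{i\ge 1}\frac{1}{i!}(\ad\overline w)^{i-1}$ is a finite sum, hence a well-defined $R$-linear endomorphism of $R\otimes_{\mathbb K}L$. It is invertible because its ``leading term'' (the $i=1$ summand) is the identity and the remaining terms are nilpotent; concretely $\varphi=\mathrm{id}+\psi$ with $\psi$ nilpotent, so $\varphi^{-1}=\sum_{m\ge 0}(-\psi)^m$ is again a finite sum. Thus $\varphi$ is an $R$-module automorphism.

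\medskip

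Part (3) is the substantive one and the main obstacle. Write $\overline b_p=\varphi\big(\frac{\partial\overline w}{\partial x_p}\big)$ with $\varphi$ the automorphism from (2); since $\varphi$ is an $R$-automorphism, $\{\overline b_1,\dots,\overline b_n\}$ is an $R$-basis of $R\otimes_{\mathbb K}L$ if and only if $\{\frac{\partial\overline w}{\partial x_1},\dots,\frac{\partial\overline w}{\partial x_n}\}$ is. Now $\frac{\partial\overline w}{\partial x_p}=\sum_{i=1}^n\frac{\partial w_i}{\partial x_p}\otimes l_i$, so the transition matrix from the basis $\{1\otimes l_1,\dots,1\otimes l_n\}$ to the family $\{\frac{\partial\overline w}{\partial x_p}\}_p$ is exactly the Jacobi matrix $J(w_1,\dots,w_n)=\big(\frac{\partial w_i}{\partial x_p}\big)$, whose determinant is $c\in\mathbb K^\star$ by hypothesis; hence this family is an $R$-basis, and therefore so is $\{\overline b_p\}$. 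Finally, to compute $\det(b_{ij})$, note that the matrix $(b_{ij})$ expressing $\overline b_p=\sum_i b_{pi}\otimes l_i$ in the basis $\{1\otimes l_i\}$ is the product of the matrix of $\varphi$ in that basis with the Jacobi matrix; the matrix of $\varphi$ is unipotent (identity plus a nilpotent matrix, since the $i\ge 2$ terms of $\varphi$ land in the proper ideal $[R\otimes_{\mathbb K}L,R\otimes_{\mathbb K}L]$), so its determinant is $1$, giving $\det(b_{ij})=\det J(w_1,\dots,w_n)=c\in\mathbb K^\star$. The one point requiring a little care is the claim that the higher-order terms of $\varphi$ contribute a strictly-upper-triangularizable (nilpotent) matrix rather than merely a nilpotent operator on the module — this is where one uses that the image of $\ad\overline w$ lies in the commutator ideal of $R\otimes_{\mathbb K}L$, together with a choice of basis adapted to the lower central series of $L$ (available by nilpotency), so that $\ad\overline w$ is represented by a strictly upper triangular matrix over $R$.
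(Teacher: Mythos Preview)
Your argument is correct and follows essentially the same route as the paper: part~(1) is immediate from the bracket formula, part~(2) is the observation $\varphi=E+\text{(nilpotent)}$, and part~(3) is the factorization through the automorphism $\varphi$ applied to the basis $\{\partial\overline w/\partial x_p\}$, whose transition matrix is the Jacobian. Your final caveat is unnecessary, though harmless: a nilpotent $R$-linear endomorphism of a free $R$-module of finite rank automatically has a nilpotent matrix in \emph{any} basis (the matrix of $N^k$ is the $k$-th power of the matrix of $N$), and since $R=\mathbb K[x_1,\dots,x_n]$ is a domain one gets $\det(E+N)=1$ by passing to the fraction field---no triangularization via the lower central series is required.
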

 \begin{proof} (1) Obvious.

(2) As $\ad \overline{w}$ is a nilpotent endomorphism of
$R$-module $R\otimes _{\mathbb K}L,$ the map $\varphi
=\sum_{i=1}^{\infty}\frac{1}{i!} \left(\ad \overline
w\right)^{i-1}$  is well defined and is an endomorphism of the
$R$-module $R\otimes _{\mathbb K}L.$ Since $\varphi
=E+\sum_{i=2}^{\infty}\frac{1}{i!} \left(\ad \overline
w\right)^{i-1}$ is the sum of the identity and a nilpotent
endomorphisms, the map $\varphi$ is an automorphism of the free
$R$-module $R\otimes _{\mathbb K}L.$

(3) Let $\overline{w}=\sum_{i=1}^n {w_{i}\otimes l_i}$ be an
element of $R\otimes _{\mathbb K}L$ such that  $\det J(w_{1},
\ldots , w_{n})=c\in \mathbb{ K}^{\star}.$ It is easy to see that
the set of elements $\{ \frac{\partial w}{\partial x_1}, \ldots ,
\frac{\partial w}{\partial x_n}\}$ is a basis of the $R$-module
$R\otimes _{\mathbb K}L.$ Since the  map $\varphi $ defined above
is an automorphism of the $R$-module $R\otimes _{\mathbb K}L$, the
set of elements $\overline{b_{p}}=\varphi (\frac{\partial
w}{\partial x_p}), \ p=1, \ldots ,n$ is the basis of this module.
Therefore $\det (b_{ij})_{i,j=1}^n=c\in \mathbb{K}^{\star}.$
 \end{proof}

\begin{theorem}~\label{nil}
{\rm{(1)}} Let $L$ be an arbitrary nilpotent Lie algebra over any
field  $\mathbb{K}$ of characteristic $0$. Then there exists a
basic subalgebra $\overline{L}$ of $W_n(\mathbb{K})$, such that
$\overline{L}$ is isomorphic to $L$ (by that  every basis of
$\overline L$ over $\mathbb K$ is a basis of $R$-module
$W_n(\mathbb{K})$).

    {\rm{(2)}} Let $\overline{L}$ be a basic Lie subalgebra of $W_n(\mathbb{K})$,
    such that $\overline{L}$ is isomorphic to a nilpotent Lie algebra
    $L$ with a basis
    $\lbrace l_1,l_2,...l_n\rbrace$, satisfying  the relations (\ref{n}), let $D_i$ be  the images of
    the elements $l_i$ by this isomorphism. Write down
    $\frac{\partial }{\partial x_i}=\sum_{j=1}^n {b_{ij}D_j}$,
    $\overline{b_p}=\sum_{i=1}^n {b_{pi}\otimes l_i}$, $p=1,\ldots ,n$.
    Then there exists an element $\overline{w}=\sum_{i=1}^n {w_{i}\otimes l_i}\in R\otimes _{\mathbb K}L$
    such that $\det J\left(w_{1},w_{2},...w_{n}\right) \in \mathbb{K}^\star$
    and the following equalities  hold:
    \[\overline{b_p}=\sum_{i=1}^{\infty}{\frac{1}{i!}
    \left({\ad} \
    \overline w\right)^{i-1}\left(\frac{\partial w}{\partial x_p}\right)}, \ p=1, \ldots ,n\]
    (the number of  nonzero summands in this series is finite
    because the Lie algebra $R\otimes _{\mathbb K}L$ is nilpotent).
    \end{theorem}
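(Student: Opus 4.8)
The plan is to prove the two parts of Theorem~\ref{nil} separately, using Lemma~\ref{main} (and its reformulation in Remark~\ref{equiv}) together with the structural Lemma~\ref{properties}.

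\textbf{Part (1): existence of a basic copy of a nilpotent $L$.}
First I would fix a flag of ideals $L=L_0\supset L_1\supset\dots\supset L_n=\{0\}$ (Engel), pick $l_i\in L_{i-1}\setminus L_i$, so that the structure constants satisfy~(\ref{n}), and form $\widehat L$ as in the text. The goal is to produce an element $\overline w=\sum_i w_i\otimes l_i\in R\otimes_{\mathbb K}L$ with $\det J(w_1,\dots,w_n)\in\mathbb K^\star$ and such that, setting $\overline{b_p}=\sum_{i=1}^{\infty}\frac1{i!}(\ad\overline w)^{i-1}(\partial\overline w/\partial x_p)$, the relations~(\ref{main2}) hold, i.e.\ $[\frac{\partial}{\partial x_p}+\overline{b_p},\frac{\partial}{\partial x_q}+\overline{b_q}]=0$ in $\widehat L$. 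The key observation is that $\varphi=\sum_i\frac1{i!}(\ad\overline w)^{i-1}$ is exactly the ``difference quotient'' of $\exp(\ad\overline w)$, so $\frac{\partial}{\partial x_p}+\overline{b_p}$ should be recognised as $\exp(-\ad\overline w)\circ\frac{\partial}{\partial x_p}\circ\exp(\ad\overline w)$ acting inside $\widehat L$ — more precisely, conjugating $\frac{\partial}{\partial x_p}$ by the inner automorphism $\exp(\ad\overline w)$ of $\widehat L$ yields $\frac{\partial}{\partial x_p}+\varphi(\frac{\partial\overline w}{\partial x_p})=\frac{\partial}{\partial x_p}+\overline{b_p}$. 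Once this identity is verified (a direct computation using that $\ad\overline w$ commutes with nothing problematic and that $\frac{\partial}{\partial x_p}(\overline w)=\partial\overline w/\partial x_p$), relations~(\ref{main2}) follow automatically from $[\frac{\partial}{\partial x_p},\frac{\partial}{\partial x_q}]=0$ because conjugation by an automorphism preserves vanishing of brackets. The simplest choice is $\overline w=\sum_i x_i\otimes l_i$, for which $J(w_1,\dots,w_n)=E$ and $\partial\overline w/\partial x_p=1\otimes l_p$. Then by Lemma~\ref{properties}(3) the $\overline{b_p}=\sum_i b_{pi}\otimes l_i$ have $\det(b_{pi})\in\mathbb K^\star$; inverting this matrix over $R$ defines derivations $D_1,\dots,D_n\in W_n(\mathbb K)$ via $\frac{\partial}{\partial x_i}=\sum_j b_{ij}D_j$, and since $\det(b_{ij})\in\mathbb K^\star$ the $D_j$ form an $R$-basis of $W_n(\mathbb K)$. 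Because~(\ref{main2}), hence~(\ref{mainequ}), holds with the chosen constants $c_{ij}^k$, the converse part of Lemma~\ref{main} gives $[D_i,D_j]=\sum_k c_{ij}^k D_k$, so $\overline L:=\mathbb K\langle D_1,\dots,D_n\rangle$ is a subalgebra of $W_n(\mathbb K)$ isomorphic to $L$; and being an $R$-basis, it is basic. One still has to check that the $D_j$ actually lie in $W_n(\mathbb K)$ (polynomial coefficients) — this holds since the $b_{pi}$ are polynomials and $(b_{pi})^{-1}$ has polynomial entries as its determinant is a nonzero scalar.

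\textbf{Part (2): every nilpotent basic $\overline L$ arises this way.}
Here I start from a basic $\overline L$ with basis $D_i$, structure constants $c_{ij}^k$ satisfying~(\ref{n}), and the matrix $(b_{ij})$ defined by $\frac{\partial}{\partial x_i}=\sum_j b_{ij}D_j$ (so $\det(b_{ij})\in\mathbb K^\star$). By Lemma~\ref{main} the $b_{pi}$ satisfy~(\ref{mainequ}), equivalently~(\ref{main2}) in $\widehat L$: the $n$ elements $e_p:=\frac{\partial}{\partial x_p}+\overline{b_p}$ of $\widehat L$ pairwise commute. I must reconstruct $\overline w$. The natural route is to invert the conjugation identity from Part~(1): I want $\overline w$ such that $\exp(\ad\overline w)$ conjugates each $\frac{\partial}{\partial x_p}$ to $e_p$. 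Working coordinate-wise, the condition $\overline{b_p}=\varphi(\partial\overline w/\partial x_p)$ is a (nonlinear, but triangular) system for the coordinates $w_i$ of $\overline w$. Using the flag/grading~(\ref{n}), $\ad\overline w$ is strictly upper-triangular with respect to the filtration by the $L_i$, so the system can be solved recursively: at the top filtration layer $\varphi$ reduces to the identity, giving $\partial w_i/\partial x_p=b_{pi}$ for the largest relevant $i$; the integrability condition $\partial b_{pi}/\partial x_q=\partial b_{qi}/\partial x_p$ needed to solve for $w_i$ as a polynomial is precisely the ``top layer'' of~(\ref{mainequ}) (where the quadratic term $\sum b_{pi}b_{qj}c_{ij}^k$ vanishes by~(\ref{n})); then one peels off successive layers, at each stage the unknown $w_i$ entering only through first derivatives with a Poincaré-type integrability condition supplied by the lower-order instances of~(\ref{mainequ}) together with the already-determined higher $w_j$'s. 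This produces $\overline w=\sum_i w_i\otimes l_i$ with polynomial $w_i$ and with $\overline{b_p}=\varphi(\partial\overline w/\partial x_p)$; finally $\det J(w_1,\dots,w_n)\in\mathbb K^\star$ follows from Lemma~\ref{properties}(3) applied in reverse — since $\varphi$ is an $R$-module automorphism carrying $\{\partial\overline w/\partial x_p\}$ to the $R$-basis $\{\overline{b_p}\}$, the $\{\partial\overline w/\partial x_p\}$ form an $R$-basis, i.e.\ $\det J(w_1,\dots,w_n)\in\mathbb K^\star$.

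\textbf{Main obstacle.}
The essential difficulty is Part~(2): producing an honest polynomial solution $\overline w$ of the nonlinear system $\overline{b_p}=\varphi(\partial\overline w/\partial x_p)$ and verifying integrability at each step. The engine is the triangularity forced by the flag condition~(\ref{n}): it decouples the system into layers on which $\varphi$ acts trivially modulo already-known data, turning each step into an ordinary Poincaré-lemma integration whose compatibility condition is exactly the corresponding component of~(\ref{mainequ}). I expect Part~(1) to be routine once the conjugation identity $\exp(-\ad\overline w)\circ\frac{\partial}{\partial x_p}\circ\exp(\ad\overline w)=\frac{\partial}{\partial x_p}+\overline{b_p}$ is established, and most of the write-up effort will go into making the recursive integration in Part~(2) precise.
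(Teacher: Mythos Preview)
Your Part~(1) is essentially the paper's argument: recognise $\frac{\partial}{\partial x_p}+\overline{b_p}=e^{\ad\overline w}\bigl(\frac{\partial}{\partial x_p}\bigr)$ in $\widehat L$, deduce~(\ref{main2}) from $\bigl[\frac{\partial}{\partial x_p},\frac{\partial}{\partial x_q}\bigr]=0$, take $\overline w=\sum_i x_i\otimes l_i$, and invert via Lemma~\ref{properties}(3) and Lemma~\ref{main}. (One minor sign slip: $e^{\ad\overline w}\bigl(\frac{\partial}{\partial x_p}\bigr)=\frac{\partial}{\partial x_p}+\varphi\bigl(-\frac{\partial\overline w}{\partial x_p}\bigr)$, since $\bigl[\overline w,\frac{\partial}{\partial x_p}\bigr]=-\frac{\partial\overline w}{\partial x_p}$; this is harmless, it only swaps $\overline w$ for $-\overline w$.)

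For Part~(2) your overall strategy --- triangular recursion along the flag, Poincar\'e integration, integrability coming from~(\ref{mainequ}) and~(\ref{n}) --- matches the paper, but the paper packages the recursion differently, and the difference is worth noting. Instead of solving for the coordinates $w_k$ directly, the paper locates the first nonzero column $m_1$ of $(b_{pi})$, observes that~(\ref{mainequ}) for $k=m_1$ collapses (via~(\ref{n})) to $\frac{\partial b_{qm_1}}{\partial x_p}=\frac{\partial b_{pm_1}}{\partial x_q}$, integrates to a polynomial $h_1$, and then \emph{applies the automorphism} $e^{\ad(h_1\otimes l_{m_1})}$ of $\widehat L$ to all the elements $\frac{\partial}{\partial x_p}+\overline{b_p}$. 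Because this is a Lie-algebra automorphism, the images $\frac{\partial}{\partial x_p}+\overline{d_p}$ again satisfy~(\ref{main2}) and hence~(\ref{mainequ}), now with the first nonzero column strictly further to the right. Iterating and then combining $e^{\ad\overline{h_s}}\cdots e^{\ad\overline{h_1}}=e^{-\ad\overline w}$ by Baker--Campbell--Hausdorff produces $\overline w$. The payoff is that the integrability check at every step is again literally the ``first nonzero column'' case of~(\ref{mainequ}) for fresh data. In your direct coordinate-by-coordinate scheme, at step $k>m_1$ you must verify that the $1$-form $p\mapsto b_{pk}-F_p$ is closed, where $F_p$ gathers the bracket contributions from the already-found $w_j$ with $j<k$; this amounts to showing $\frac{\partial F_p}{\partial x_q}-\frac{\partial F_q}{\partial x_p}=\sum_{i,j}b_{pi}b_{qj}c_{ij}^k$, which is true but is not ``exactly the corresponding component of~(\ref{mainequ})'' as you write --- it requires a computation that, unwound, reproduces the automorphism argument. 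So your plan is correct, but expect the clean execution of the inductive step to pass through the paper's trick (or an equivalent amount of work).
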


\begin{proof}
(1) Let $L$ be a nilpotent  Lie algebra over the field
$\mathbb{K}$ of dimension $n$ and $\lbrace l_1,l_2,...l_n\rbrace$
be its basis
 such that the structure constants $c_{ij}^k$ satisfy the relations (\ref{n}). We will
construct $b_{ij}\in R, i,j=1,\ldots n,$  which satisfy relations
(\ref{mainequ}) and have property $\det (b_{ij})_{i,j=1}^{n} \in
\mathbb{K}^\star$.  If we consider the Lie algebra $\widehat L$,
then by Remark \ref{equiv} the conditions (\ref{mainequ}) are
equivalent to the conditions (\ref{main2}) in the terms of
$\widehat L$ for $\overline{b_p}=\sum_{i=1}^n {b_{pi}\otimes
l_i}$, $p=1,\ldots ,n$:
\[\left[\frac{\partial}{\partial x_p}+\overline{{b_p}},
\  \frac{\partial}{\partial x_q}+\overline{{b_q}}\right]=0.\] Take
an arbitrary element $\overline{w}=\sum_{i=1}^n {w_{i}\otimes l_i}
\in \widehat{L}$. Put
$\overline{b_p}=\sum_{i=1}^{\infty}{\frac{1}{i!} \left({\ad}
\hskip 0.5mm \overline w\right)^{i-1}\left(-\frac{\partial
\overline w}{\partial x_p}\right)}, \ p=1, \ldots n,$  where as
usually $({\ad}
 \overline {w})^{0}=E$ is the identity
operator. Note, that the  relations
$$\sum_{i=1}^{\infty}{\frac{1}{i!} \left({\ad}
\overline w\right)^{i-1}\left(-\frac{\partial \overline
w}{\partial x_p}\right)}= \sum_{i=1}^{\infty}{\frac{1}{i!}
\left({\ad}  \overline
w\right)^{i-1}\left[\overline w,\frac{\partial}{\partial
x_p}\right]}= \sum_{i=1}^{\infty}{\frac{1}{i!} \left({\ad}
 \overline w\right)^{i}\left(\frac{\partial}{\partial
x_p}\right)}$$ hold in the Lie algebra $\widehat L$ for $p=1,
\ldots ,n.$ Therefore, we have the following equalities in the Lie
algebra $\widehat L$:
\[\left[\frac{\partial}{\partial x_p}+\overline{b_p},\frac{\partial}{\partial
 x_q}+\overline{b_q}\right]=\]
\[\left[\frac{\partial}{\partial x_p}+\sum_{i=1}^{\infty}{\frac{1}{i!}
\left({\ad}  \overline
w\right)^{i}\left(\frac{\partial}{\partial x_p}\right)},
\frac{\partial}{\partial x_q}+\sum_{i=1}^{\infty}{\frac{1}{i!}
\left({\ad}  \overline
w\right)^{i}\left(\frac{\partial}{\partial x_q}\right)}\right]=\]
\[\left[ e^{{\ad}  \hskip 0.5mm \overline w}\left(\frac{\partial}{\partial x_p}\right),
e^{{\ad}   \overline w}\left(\frac{\partial}{\partial x_q}\right)
\right]= e^{{\ad} \hskip 0.5mm \overline
w}\left[\frac{\partial}{\partial x_p},\frac{\partial}{\partial
x_q} \right]=0\]
 (we took into account that  $e^{{\ad}  \overline w}$ is
 an automorphism of the Lie algebra $\widehat L$ and $\left[\frac{\partial}{\partial
  x_q},\frac{\partial}{\partial x_p}\right]=0, \ p, q=1, \ldots,
  n$).

 Thus, we have  elements $\overline{b_1}=\sum_{i=1}^n {b_{1i}\otimes
 l_i}, \
  \overline{b_2}=\sum_{i=1}^n {b_{2i}\otimes l_i}, \ldots , \overline{b_n}=\sum_{i=1}^n {b_{ni}\otimes
  l_i}$
  satisfying equations (\ref{main1}), then these elements satisfy also
  the equations (\ref{mainequ}).

  Take any element   $\overline w$ of $R\otimes L$ such that $\det J(\overline w)\in \mathbb{K}^{\star},$
   for example, $\overline{w}=\sum_{i=1}^n {x_{i}\otimes l_i} \in \widehat{L}$.
   Let    $B^{-1}=A=(a_{ij})$ be the inverse matrix for $B$ (the
   matrix $B=(b_{ij})$ is invertible by  Lemma \ref{properties} p.3). By Lemma \ref{main},
  the derivations $D_i=\sum_{i=1}^n a_{ij} \frac {\partial}  {\partial x_j}$,$i=1,...n$,
  satisfy the  relations $\left[D_i,D_j\right]=\sum_{k=1}^n{c_{ij}^k}D_k$ and $\det (a_{ij})
   \in \mathbb{K}^\star$. We have shown that $\mathbb {K}\langle D_1,D_2,...D_n \rangle$ is a
   basic    subalgebra of $W_n(\mathbb{K})$, which is isomorphic to $L$. This completes the proof of part (1).

   (2) Take any elements
  $$\overline{b_1}=\sum_{i=1}^n {b_{1i}\otimes
  l_i},\
  \overline{b_2}=\sum_{i=1}^n {b_{2i}\otimes l_i} , \ldots , \overline{b_n}=\sum_{i=1}^n
  {b_{ni}\otimes l_i}$$
    of $R \otimes _{\mathbb K}L$  satisfying the relations (\ref{main2}).
    We will build an element  $\overline w\in R \otimes _{\mathbb K}L$
     such that $\overline{b_p}=\sum_{i=1}^\infty {\frac{1}{i!}( \ad \overline w )^{i-1}(-\frac{\partial w}{\partial
     x_p}}) $, $p=1, \ldots n$.
     This equality is equivalent to the relation
     $e^{\ad \overline w}\left(\frac{\partial}{\partial x_p}\right)=\frac{\partial }{\partial x_p}+\overline{b_p}$.
     Applying the automorphism $e^{\ad \overline {-w}}$ to the both sides of this relation, we obtain
     \begin{equation}
    \frac{\partial}{\partial x_p}=e^{\ad -\overline
    {w}}\left(\frac{\partial }{\partial x_p}+\overline{b_p}\right). \label{w}
    \end{equation}
  Thus, we must prove that this equality holds for $p=1, \ldots n$.
  Consider the  matrix
  $B=\left(\begin{array}{ccc} b_{11} & \ldots & b_{1n} \\
    \vdots & \ddots & \vdots \\
     b_{n1} & \ldots & b_{nn} \\
      \end{array} \right)$ with entries that are coordinates
      of the elements $\overline {b_1}, \ldots \overline {b_n}$.
  Let $m_1$ be the number of the first nonzero column of $B$
  (for $B=0$ take $w=0$, then (\ref{w}) obviously holds).
  Write the equations (\ref{mainequ}) for $k=m_1$:
  \begin{equation}
\sum_{i,j=1}^n {b_{pi}b_{qj}c_{ij}^{m_1}}+\frac{\partial
b_{q{m_1}}}{\partial x_p}-\frac{\partial b_{p{m_1}}}{\partial
x_q}=0, \  p,q=1, \ldots , n. \label{mainequm1}
\end{equation}
 Since  $m_1$ is the number of the first nonzero column of $B$, we have $b_{pi}=0$ for $i<m_1$, $p=1, \ldots n$.
 On the other hand, if $i \geq m_1$, then $c_{ij}^k=0$ by (\ref{n}) (because $L$ is nilpotent).
Hence (\ref{mainequm1}) is equivalent to the  equations
 $\frac{\partial b_{q{m_1}}}{\partial x_p}-\frac{\partial b_{p{m_1}}}{\partial x_q}=0$, $p,q=1, \ldots n$.
Then there exists a polynomial $h_1$  such that
 $ b_{p{m_1}}=\frac{\partial h}{\partial x_p}$, $p=1, \ldots n$ (see, for example
\cite{Now}).
 Denote   $\overline {h_1}=h_1 \otimes l_{m_1}$ and consider the elements
  $e^{\ad \overline h_1}\left(\frac{\partial }{\partial x_p}+\overline{b_p}\right) \ p=1, \ldots
  ,n.$
  Note that $$\overline {b_p}=\sum_{i=m_1}^n b_{pi} \otimes l_i=
  \frac{\partial h}{\partial x_p}\otimes l_{m_1}+\sum_{i=m_1+1}^n b_{pi} \otimes l_i,  \ p=1, \ldots
  n,$$ and therefore it holds
  $$e^{\ad \overline h_1}\left(\frac{\partial }{\partial x_p}+\overline{b_p}\right)=
  e^{\ad \overline h_1}\left(\frac{\partial }{\partial x_p}\right)+ e^{\ad \overline h_1}
  \left(\overline{b_p}\right)=$$ $$
  \frac{\partial }{\partial x_p}-\frac{\partial h_1}{\partial x_p}\otimes l_{m_1}+
  e^{\ad \overline h_1}\left(\frac{\partial h}{\partial x_p}\otimes l_{m_1}+\sum_{i=m_1+1}^n b_{pi}
   \otimes l_i\right), \ p=1, \ldots n .$$
 Since  $\left[\overline h_1, \frac{\partial h}{\partial x_p}\otimes
 l_{m_1}\right]=0,$ we get
   $e^{\ad \overline h_1}\left( \frac{\partial h}{\partial x_p}\otimes l_{m_1}\right)=
 \frac{\partial h_1}{\partial x_p}\otimes l_{m_1},$ $p=1, \ldots n $. It is easy to see that
 $e^{\ad \overline h_1}\left(\sum_{i=m_1+1}^n b_{pi} \otimes l_i\right) \in R \otimes \langle l_{m_1+1}, \ldots l_n \rangle$,
 because $R \otimes \langle l_{m_1+1}, \ldots l_n \rangle$ is an ideal
 of the algebra $R \otimes L$.
 Then we have
 \[e^{\ad \overline h_1}\left(\frac{\partial }{\partial x_p}+\overline{b_p}\right)=
 \frac{\partial }{\partial x_p}-\frac{\partial h_1}{\partial x_p}\otimes l_{m_1}+\frac{\partial h_1}{\partial x_p}\otimes l_{m_1}+\overline {d_p},\]
 for $\overline {d_p}=e^{\ad \overline h_1}\left(\sum_{i=m_1+1}^n b_{pi} \otimes l_i\right) \in R \otimes \langle l_{m_1+1}, \ldots l_n \rangle.$
  Therefore,
  \[e^{\ad \overline h_1}\left(\frac{\partial }{\partial x_p}+\overline{b_p}\right)=\frac{\partial }{\partial x_p}+\overline {d_p},
  p=1, \ldots n .\]
  Denote by  $d_{pi}$  the coordinates of  the element $\overline d_{p}$ in basis
  $\{1\otimes l_{1}, \ldots , 1\otimes l_{n}\}$, $p=1, \ldots ,
  n,$ i.e.   $\overline {d_p}=\sum_{i=1}^n d_{pi} \otimes l_i$.
  Consider the matrix $D=\left(
                                                                            \begin{array}{ccc}
                                                                              d_{11} & \ldots & d_{1n} \\
                                                                              \vdots & \ddots & \vdots \\
                                                                              d_{n1} & \ldots & d_{nn} \\
                                                                            \end{array}
                                                                          \right)
  $.
  We have just proved  that the first nonzero column of the matrix $D$ has the number $m_2>m_1$.
  Analogously, applying the automorphism
  $e^{\ad \overline h_1}$ to the elements $\frac{\partial }{\partial x_p}+\overline {d_p}$, $p=1, \ldots n $,
 we get the elements $\frac{\partial }{\partial x_p}+\overline {f_p}$.
 Define the elements $f_{ij}$, $i,j=1, \ldots n $ from the  equalities
 $\overline {f_p}=\sum_{i=1}^n f_{pi} \otimes l_i$.
  The first nonzero column in the matrix $F=\left(
                                                                            \begin{array}{ccc}
                                                                              f_{11} & \ldots & f_{1n} \\
                                                                              \vdots & \ddots & \vdots \\
                                                                              f_{n1} & \ldots & f_{nn} \\
                                                                            \end{array}
                                                                          \right)
  $ has the number $m_3>m_2$. It is obvious that after  $s$ steps ($s \leq n$) we get
  the elements $\frac{\partial }{\partial x_p}$, $p=1, \ldots n $
  with the corresponding zero matrix.
  Therefore $e^{\ad \overline h_s} \ldots e^{\ad \overline h_1}\left(\frac{\partial
   }{\partial x_p}+\overline{b_p}\right)=
  \frac{\partial }{\partial x_p}$,  $p=1, \ldots n $.
  Since the Lie algebra $R \otimes L$ is nilpotent, there exists (by Campbell-Baker-Hausdorff
  formula) an element $w \in R \otimes L$ such that
  $e^{\ad \overline h_s} \ldots e^{\ad \overline h_1}=
  e^{\ad -\overline {w}}.$
  Finally, let  $\overline L$  be a basic subalgebra which is
  isomorphic to $L$, $\lbrace D_1,\ldots , D_n \rbrace$ be a basis of $\overline
  L$. Write
  $\frac{\partial }{\partial x_p}=\sum_{j=1}^n {b_{pj}D_j}$,
  the isomorphism is defined by the map $l_i \mapsto D_i$. Then
  $\overline{b_p}=\sum_{i=1}^n {b_{pi}\otimes l_i}$, satisfies (1),
  therefore $\overline{b_p}=\sum_{i=1}^{\infty}{\frac{1}{i!} \left({
  {\ad}}\
  \overline w\right)^{i-1}\left(\frac{\partial \overline w}{\partial x_p}\right)}$
  for an element $\overline w \in R\otimes _{\mathbb K}L$ such that  $\det J(\overline w)$ is invertible.
  This completes the proof of the theorem.
\end{proof}

\begin{example}~\label{geizen}
Let $L=H_n$ be the $2n+1$-dimensional Heisenberg Lie algebra, $\{
l_1, \ldots , l_{2n+1}\}$ be its standard  basis with
multiplication rule $\left[l_i,l_{n+i}\right]=l_{2n+1}$ for $1\leq
i\leq n,$ (other products are zero). Then, in this basis
$c_{i,n+i}^{2n+1}=1$, $c_{n+i,i}^{2n+1}=-1$, $1\leq i\leq n$,
 other  structure constants are zero. Take $\overline
w=\sum_{i=1}^n{-x_i \otimes l_i}$. It is clear that  $\frac
{\partial \overline w}{\partial x_p}=-1 \otimes l_p$ and
\begin{equation}
  \overline b_p=1 \otimes l_p-\frac{1}{2}\left[\sum_{i=1}^n{x_i \otimes l_i},
  1 \otimes l_p\right], 1 \leq p \leq n \label{geiz}
  \end{equation}
   in the Lie algebra $\widehat L$. Easy calculation shows that
   $\overline b_p=1 \otimes l_p+\frac{1}{2}x_{p+n}\otimes l_{2n+1}$, $p\leq n$,
   $\overline b_p=1 \otimes l_p-\frac{1}{2}x_{p-n}\otimes l_{2n+1}$,
   $n<p\leq 2n$ and  $\overline b_{2n+1}=1 \otimes l_{2n+1}$.  It can be easily shown that
    $\det(b_{ij})=1$. Passing to the  inverse matrix $B^{-1}$ to the matrix
     $B=(b_{pi})_{p,i=1}^n$ one can easily show  that the linear span over $\mathbb K$  of
   the following derivations is a basic subalgebra of $W_{2n+1}(\mathbb{K})$ which is  isomorphic to $H_n$:

  $$ D_i=\frac {\partial }{\partial x_i}-\frac {1}{2}x_{n+i}\frac {\partial }{\partial x_{2n+1}}, 1\leq i\leq
  n,$$
  $$   D_i=\frac {\partial }{\partial x_i}+\frac {1}{2}x_{n-i}\frac {\partial }{\partial x_{2n+1}}, n<i\leq 2n,
  \  \ \  D_{2n+1}=\frac {\partial }{\partial x_{2n+1}}$$.
\end{example}

\section{On the solvable basic Lie subalgebras}
Some known properties of finite dimensional Lie algebras and
modules over them are collected in the next Lemma.

\begin{lemma} \label{decom}
Let $L$ be a finite dimensional Lie algebra over an algebraically
closed field of zero characteristic and let $H$ be  any its Cartan
subalgebra. Then

{\rm (1)}  if the algebra  $L$ is solvable, then  $L=H+[L,L];$

{\rm (2)}  if $L$ is semisimple and $L= N_{-}\oplus H\oplus N_{+}$
is  its triangular decomposition, then the subalgebras $N_{+}$ and
$N_{-}$ act  nilpotently  on every finite dimensional module $M$
over the Lie algebra  $L.$

\end{lemma}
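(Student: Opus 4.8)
The plan is to treat the two parts separately, each reducing to a standard structural fact about finite-dimensional Lie algebras.

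For (1), I would use the Fitting decomposition of the adjoint action of $H$ on $L$. Since $H$ is a Cartan subalgebra it coincides with the Fitting null component $L^{0}(H)=\{x\in L:(\ad h)^{\dim L}x=0\ \text{for all}\ h\in H\}$, so that $L=H\oplus L^{1}(H)$, where $L^{1}(H)=\bigoplus_{\alpha\neq 0}L_{\alpha}$ is the direct sum of the nonzero generalized weight spaces of $H$ acting by $\ad$. For each weight $\alpha\neq 0$ choose $h\in H$ with $\alpha(h)\neq 0$; then $\ad h$ maps $L_{\alpha}$ into itself and acts there as $\alpha(h)$ times the identity plus a nilpotent operator, hence invertibly, so $L_{\alpha}=(\ad h)(L_{\alpha})=[h,L_{\alpha}]\subseteq[L,L]$. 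Consequently $L^{1}(H)\subseteq[L,L]$ and $L=H+L^{1}(H)\subseteq H+[L,L]\subseteq L$, which gives $L=H+[L,L]$. (This part does not actually use solvability; alternatively one may pass to the abelian quotient $L/[L,L]$, whose unique Cartan subalgebra is the whole of $L/[L,L]$, and read off the claim from the fact that the image of a Cartan subalgebra under a surjective homomorphism is again a Cartan subalgebra.)

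For (2), I would first observe that every element of $N_{+}$, and symmetrically of $N_{-}$, is $\ad$-nilpotent. Indeed, fixing a base of the root system one has the height grading $L=\bigoplus_{k\in\mathbb Z}L^{(k)}$ with $L^{(0)}=H$, $N_{+}=\bigoplus_{k>0}L^{(k)}$ and $N_{-}=\bigoplus_{k<0}L^{(k)}$; for $x\in N_{+}$ the operator $\ad x$ strictly raises the degree $k$, hence $(\ad x)^{N}=0$ for $N$ sufficiently large, and likewise for $N_{-}$. Then I would invoke the compatibility of the abstract Jordan decomposition with finite-dimensional representations of a semisimple Lie algebra: if $\rho\colon L\to\mathfrak{gl}(M)$ is such a representation and $x\in L$ is $\ad$-nilpotent, then $x$ is its own nilpotent Jordan component in $L$, so $\rho(x)$ is a nilpotent endomorphism of $M$. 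Therefore $\rho(N_{+})$ and $\rho(N_{-})$ are Lie subalgebras of $\mathfrak{gl}(M)$ consisting of nilpotent operators, and by Engel's theorem each admits a full flag $0=M_{0}\subset M_{1}\subset\cdots\subset M_{r}=M$ with $\rho(N_{\pm})M_{i}\subseteq M_{i-1}$; in particular the associative subalgebra of $\operatorname{End}(M)$ they generate is nilpotent, which is precisely the assertion that $N_{+}$ and $N_{-}$ act nilpotently on $M$.

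The elementary ingredients here — the Fitting decomposition together with the characterization of a Cartan subalgebra as its own null component, the height grading, Engel's theorem — are routine. The single substantial input, which I would simply cite from a standard reference on semisimple Lie algebras, is the preservation of the Jordan decomposition under finite-dimensional representations, equivalently the statement that an $\ad$-nilpotent element of a semisimple Lie algebra acts as a nilpotent operator in every finite-dimensional module; this is the step I expect to be the main obstacle, since it cannot be reproved in a couple of lines.
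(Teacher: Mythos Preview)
Your argument is correct. Note, however, that the paper does not actually prove this lemma: the sentence immediately preceding it announces that ``some known properties of finite dimensional Lie algebras and modules over them are collected in the next Lemma,'' and no proof is given. So there is no approach in the paper to compare yours against; you are supplying details where the author simply cites standard theory.

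A couple of minor remarks. In part~(1) you correctly observe that solvability plays no role in your argument via the Fitting decomposition; the alternative route through the quotient $L/[L,L]$ is equally valid and perhaps slightly cleaner to state. In part~(2), your height argument shows that $\ad x$ strictly increases the grading for $x\in N_{+}$, which suffices for nilpotency even though $x$ need not be homogeneous; and your identification of the phrase ``acts nilpotently'' with the existence of a flag annihilated step by step (equivalently, $\rho(N_{\pm})^{k}M=0$ for some $k$) matches exactly the sense in which the paper uses the term later in Lemma~4. The one nontrivial external input, preservation of the abstract Jordan decomposition under finite-dimensional representations of a semisimple Lie algebra, is indeed the right tool here and is available in the reference \cite{Ham} already listed in the paper's bibliography.
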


\begin{proposition}\label{sol}
Let $L$ be an arbitrary $n$-dimensional ($n\geq 1$) solvable
Lie algebra over an algebraically closed field $\mathbb{K}$
of zero characteristic, then there is a basic subalgebra $\overline{L}$
of $W_n(\mathbb{K})$, such that $\overline{L}$ is isomorphic to $L$.
\end{proposition}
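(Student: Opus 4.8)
The plan is to reduce the statement, via Lemma~\ref{main} and Remark~\ref{equiv}, to a construction inside the Lie algebra $\widehat L$, and then to build the required commuting elements by first realizing the nilpotent ideal $[L,L]$ as in Theorem~\ref{nil} and afterwards adjoining the ``Cartan directions''. First I would note that, by Lemma~\ref{main} together with Remark~\ref{equiv}, it is enough to exhibit elements $\overline{b_1},\dots,\overline{b_n}\in R\otimes_{\mathbb K}L$ such that the elements $\frac{\partial}{\partial x_p}+\overline{b_p}\in\widehat L$ pairwise commute and the matrix $(b_{pi})$, where $\overline{b_p}=\sum_i b_{pi}\otimes l_i$, has determinant in $\mathbb K^{\star}$; then the derivations $D_i=\sum_j a_{ij}\frac{\partial}{\partial x_j}$ with $(a_{ij})=(b_{ij})^{-1}$ span, by Lemma~\ref{main}, a basic subalgebra of $W_n(\mathbb K)$ isomorphic to $L$.

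Next, using Lemma~\ref{decom}(1), I would write $L=H+[L,L]$ with $H$ a Cartan subalgebra and $N=[L,L]$ the nilpotent ideal, and set $t=\dim N$, $s=n-t$. Since $H$ is nilpotent and acts on $N$, the module $N$ decomposes into generalized weight spaces; I would choose a basis $l_1,\dots,l_t$ of $N$ consisting of (generalized) weight vectors and compatible with a flag of ideals of $L$ contained in $N$, so that inside $N$ the structure constants are triangular in the sense of~(\ref{n}), and so that $\ad h\,(l_i)=\lambda_i(h)l_i+(\text{lower terms})$ for $h\in H$; then I would complete the basis by $l_{t+1},\dots,l_n\in H$ whose images form a basis of the abelian quotient $L/N$.

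Now I would construct the $\overline{b_p}$ in two stages. For $p\le t$ put $\frac{\partial}{\partial x_p}+\overline{b_p}=e^{\ad\overline w}\bigl(\frac{\partial}{\partial x_p}\bigr)$ with $\overline w=\sum_{i\le t}x_i\otimes l_i\in R\otimes N$; since $N$ is a nilpotent ideal, $\ad\overline w$ is nilpotent on $\widehat L$ exactly as in Lemma~\ref{properties}, and this reproduces the realization of $N$ from Theorem~\ref{nil}, involving only $x_1,\dots,x_t$ and having coordinate matrix of determinant $1$. For $p=t+k$ I would seek $\overline{b_{t+k}}=1\otimes l_{t+k}+(\text{a correction in }R\otimes N)$ and impose the remaining relations $\bigl[\frac{\partial}{\partial x_{t+k}}+\overline{b_{t+k}},\frac{\partial}{\partial x_q}+\overline{b_q}\bigr]=0$; because $N$ is an ideal and $L/N$ is abelian, every term occurring in these relations lies in $R\otimes N$, so they can be resolved by a finite composition of inner automorphisms $e^{\ad\overline u}$ with $\overline u\in R\otimes N$ (again with nilpotent $\ad$), in the spirit of the proof of Theorem~\ref{nil}(2), the $H$-action on $N$ being carried by the $1\otimes l_i$-terms already present. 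With $\overline{b_p}$ so chosen the matrix $(b_{pi})$ is block lower-triangular: the upper-left $t\times t$ block is the unipotent coordinate matrix of the realization of $N$, the lower-right $s\times s$ block is the identity (as $[H,H]\subseteq N$ contributes nothing to the $l_{t+1},\dots,l_n$-coordinates), and the upper-right block vanishes; hence $\det(b_{pi})=1\in\mathbb K^{\star}$, and Lemma~\ref{main} finishes the proof.

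The main obstacle is the second stage. Unlike in the nilpotent case, the action of $H$ on $N$ has a nonzero semisimple part, so the operator $\ad\bigl(\sum_{\text{all }i}x_i\otimes l_i\bigr)$ fails to be nilpotent on $\widehat L$ and cannot be exponentiated; the semisimple part must therefore be built into the $\overline{b_{t+k}}$ directly, and one has to check that the residual, genuinely nilpotent, discrepancies — those coming from $[H,H]\subseteq N$ and from the nilpotent parts of $\ad h|_N$ — can indeed be removed by inner automorphisms of $\widehat L$ with generator in $R\otimes N$, and that the successive steps are mutually compatible. This compatibility is where one uses that $L/N$ is abelian and that the weight operators of $H$ on $N$ commute. (An alternative route is induction on $n=\dim L$: choose a hyperplane ideal $I\supseteq[L,L]$, write $L=I\rtimes_{\delta}\mathbb K l_n$, take $\overline I\subseteq W_{n-1}(\mathbb K)$ with frame $E_1,\dots,E_{n-1}$ given by the inductive hypothesis, set $D_i=E_i$ and $D_n=\frac{\partial}{\partial x_n}+V$; here the crux becomes the extension of the derivation $\delta$ of $\overline I$ to an \emph{inner} derivation $\ad V$ of $W_{n-1}(\mathbb K)$, which again reduces, using solvability of $I$, to solving a triangular system of first-order equations over $R$.)
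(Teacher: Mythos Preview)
Your reduction to constructing pairwise-commuting elements $\frac{\partial}{\partial x_p}+\overline{b_p}\in\widehat L$ with $\det(b_{pi})\in\mathbb K^{\star}$, via Lemma~\ref{main} and Remark~\ref{equiv}, is exactly the paper's framework. The genuine gap is precisely where you locate it: your second stage. Having built the $N=[L,L]$ block first by $e^{\ad\overline w}$ with $\overline w\in R\otimes N$, you then need pre-images $c_p=\frac{\partial}{\partial x_p}$ for $p\le t$ and $c_p=\frac{\partial}{\partial x_p}+1\otimes l_p+(\text{correction})$ for $p>t$ that pairwise commute. For $p,q>t$ the bracket $[c_p,c_q]$ starts with $1\otimes[l_p,l_q]\in R\otimes[H,H]$, which is typically nonzero, so you are forced to insert further $x_{t+1},\dots,x_n$-dependent terms. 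Your proposal to remove these by automorphisms $e^{\ad\overline u}$, $\overline u\in R\otimes N$, ``in the spirit of Theorem~\ref{nil}(2)'' is not justified: that argument runs in the opposite direction (from an existing basic subalgebra back to $\overline w$) and presupposes a solution; here you would first have to produce one. The alternative inductive route has the same status --- the crux, extending $\delta$ to an inner derivation of $W_{n-1}(\mathbb K)$, is asserted but not proved.

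The paper sidesteps the obstacle by reversing the order of your two stages. It first treats the \emph{Cartan} side: choosing $l_1,\dots,l_k$ a complement of $H\cap[L,L]$ in $H$ (completed to bases $l_1,\dots,l_m$ of $H$ and $l_{k+1},\dots,l_n$ of $[L,L]$), it sets $\overline w=\sum_{i\le k}x_i\otimes l_i\in R\otimes H$; since $H$ is nilpotent, $\ad\overline w$ is nilpotent on $R\otimes H$ and one obtains commuting elements $\frac{\partial}{\partial x_p}+\overline{d_p}=e^{\ad\overline w}\bigl(\frac{\partial}{\partial x_p}\bigr)$ for $p\le k$, with $\overline{d_p}=0$ for $p>k$. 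Only \emph{afterwards} does it conjugate all of these by $e^{\ad\overline u}$ with $\overline u=\sum_{i>k}x_i\otimes l_i\in R\otimes[L,L]$. The key point you are missing is that for a solvable $L$ in characteristic zero the ideal $[L,L]$ acts nilpotently on the whole of $L$, so $\ad\overline u$ is nilpotent on $R\otimes L$ and the exponential is globally defined; the semisimple part of the $H$-action on $N$ is never exponentiated at all. With this ordering both the commutation and the determinant condition follow directly from Lemma~\ref{properties}. In short: do Cartan first and $[L,L]$ last, not the other way around.
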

\begin{proof}
Let $H$ be any Cartan subalgebra of $L$. Take a basis  $\lbrace
l_1,\ldots ,l_n \rbrace$  of $L$ with the following property:
 $\lbrace l_1,\ldots ,l_m \rbrace$  is a basis of $H$, and
if $H\cap [L, L]\not= 0,$ then $\lbrace l_{k+1},\ldots ,l_m
\rbrace$ is a basis of $H\cap [L, L]$, $m \geq k$,
  $\lbrace l_{k+1},\ldots ,l_n \rbrace$ is a basis of $[L, L]$.
Note that $[L, L]$ is a nilpotent ideal of $L$ because $L$ is
solvable and the field  $\mathbb{K}$ has zero characteristic; the
subalgebra $H$ is nilpotent as a Cartan subalgebra of $L.$

Now, put $\overline{w}=\sum_{i=1}^k {x_i \otimes l_i}$. Consider
the linear map $\phi=\sum_{i=1}^{\infty}{\frac{1}{i!}(\ad
\overline{w})^{i-1}}$
 from the $R$-module $R \otimes _{\mathbb K}H$ to itself
(since $H$ is nilpotent, the sum is finite). By Lemma
\ref{properties}, $\phi$ is an automorphism of the $R$-module $R
\otimes _{\mathbb K}H$. As $R \otimes _{\mathbb K}(H\cap [L, L])$
is an ideal of the algebra $R \otimes _{\mathbb K}H$, the map $\ad
\overline{w}$ saves the submodule $R \otimes _{\mathbb K}(H\cap
[L, L])$. The set  of elements $\lbrace -1\otimes l_{i}\rbrace ,
 i=1, \ldots ,m $ is a basis of the $R$-module $R\otimes
_{\mathbb K}H$, so it is obvious that  $\lbrace \phi (-1\otimes
l_{i})\rbrace, i=1, \ldots ,m$ is also a basis of $R\otimes
_{\mathbb K}H.$ Further, the set $\lbrace \phi (-1\otimes l_{i})
\rbrace, i=k+1, \ldots ,m$ is a basis of the submodule $R\otimes
_{\mathbb K}(H\cap [L, L])$. Then, it is clear that the elements
$\phi (-1\otimes l_{i}), i=1, \ldots ,k$ and $-1\otimes l_{i},
i=k+1, \ldots , m$ together form a basis of the $R$-module
$R\otimes _{\mathbb K}H.$

Put $ d_{p}=\phi (-1\otimes l_{i}), i=1, \ldots , k$ and consider
the  automorphism $\theta=\exp \ad w$ of the algebra
$\widehat{H}$. Then we get for $p=1, \ldots k$:
$$\theta\left(\frac{\partial}{\partial
x_p}\right)=\frac{\partial}{\partial
x_p}+\left[w,\frac{\partial}{\partial x_p}\right]
+\frac{1}{2!}\left[w,\left[w,\frac{\partial}{\partial
x_p}\right]\right]+ \ldots =$$ $$= \frac{\partial}{\partial
x_p}-\frac{\partial w}{\partial
x_p}-\frac{1}{2!}\left[w,\frac{\partial w}{\partial x_p}+
\ldots\right]= \frac{\partial}{\partial x_p}+\phi(-1 \otimes
l_p)=\frac{\partial}{\partial x_p}+\overline{d_p}.$$ It follows
from this that

  $\left[\frac{\partial}{\partial x_p}+\overline{d_p},\frac{\partial}{\partial x_q}+\overline{d_q}\right]=
 \left[\theta\left(\frac{\partial}{\partial x_p}\right),\theta\left(\frac{\partial}{\partial x_q}\right)\right]=
 \theta\left( \left[\frac{\partial}{\partial x_p},\frac{\partial}{\partial x_q}\right]\right)=0$, $p,q=1, \ldots k$.
 Consider the elements $\overline{d_p}$, $p=1, \ldots k$ as elements of the algebra $R \otimes L$.
Put $\overline{d_p}=0$, $p=k+1, \ldots n$. It is clear, taking
into account the choice of $\overline{w}$, that  $\frac{\partial
\overline{d_p}}{\partial x_q}=0$, $q=k+1, \ldots n$. Therefore,
$\left[\frac{\partial}{\partial
x_p}+\overline{d_p},\frac{\partial}{\partial
x_q}+\overline{d_q}\right]=0$, $p,q=1, \ldots n$.

Now put $\overline{u}=\sum_{i=k+1}^n {x_i \otimes l_i}$. Consider
the linear map $\psi$ from the $R$-module $ R \otimes L$ to
$R\otimes L$: $\psi=\sum_{i=1}^{\infty}{\frac{1}{i!}(\ad
\overline{u})^{i-1}}$. (The sum is finite, because $R \otimes
[L,L]$ acts nilpotently on $R \otimes L$  and $u \in R \otimes
[L,L]$). By Lemma \ref{properties}, $\psi$ is an automorphism of
the $R$-module $R \otimes L$. It is obvious that the elements
$\overline{d_p}$, $p=1, \ldots k$ and $-1 \otimes l_i$, $i=k+1,
\ldots n$ form together  a basis of the $R$-module $R \otimes L$.
Note that the set $\lbrace -1 \otimes l_i \rbrace$, $i=k+1, \ldots
n$ is a basis of the $R$-module $R \otimes [L,L]$. Therefore the
elements $\psi(-1 \otimes l_i)$, $i=k+1, \ldots n$ form a basis of
the $R$-module $R \otimes [L,L]$.

Consider now the automorphism $\eta=e^{\ad \overline{u}}=
\sum_{i=0}^{\infty}{\frac{1}{i!}(\ad \overline{u})^{i}}$ of the
algebra $\widehat L.$ Then, the elements $ \eta(\overline{d_p}) $,
$p=1, \ldots k$ and $\eta(-1 \otimes l_p)$, $p=k+1, \ldots n$ form
a basis of the $R$-module $R \otimes L$, and the set $\eta(-1
\otimes l_p)$, $p=k+1, \ldots n$ is a basis of the $R$-submodule
$R \otimes [L,L]$. Note, that $\psi(-1 \otimes l_p)$, $p=k+1,
\ldots n$ is also a basis of the $R$-submodule $R \otimes [L,L]$.
Then, the  elements $\eta(\overline{d_p})$, $p=1, \ldots k$ and
$\psi(-1 \otimes l_p)$, $p=k+1, \ldots n$ form together a basis of
the $R$-module $R \otimes L$. Put
$\overline{b_p}=\eta(\overline{d_p})$, $p=1, \ldots k$ and
$\overline{b_p}=\psi(-1 \otimes l_p)$, $p=k+1, \ldots n$. Writing
down $\overline{b_p}=\sum_{i=1}^n(b_{pi} \otimes l_i)$, we have
$\det(b_{pi})_{p,i=1}^{n} \in \mathbb{K}^{\star}$.

By construction of the element $\overline{u}$, it holds
$\left[\overline{u},\frac{\partial}{\partial x_p}\right]=0$  for
$p=1, \ldots k$, so we have  $e^{\ad
\overline{u}}\left(\frac{\partial}{\partial
x_p}\right)=\frac{\partial}{\partial x_p}$, $p=1, \ldots k$. Note
that
\[\frac{\partial}{\partial x_p}+\overline{b_p}=\frac{\partial}{\partial x_p}+\eta(\overline{d_p})
=e^{\ad \overline{u}} \left(\frac{\partial}{\partial x_p}
\right)+e^{\ad \overline{u}}(\overline{d_p})= e^{\ad \overline{u}}
\left(\frac{\partial}{\partial x_p} + \overline{d_p}\right)  \
p=1, \ldots k.
\]
It is easy to see, that for $p=k+1, \ldots n$ it holds
\[-1 \otimes l_p=-\frac{\partial
\overline{u}}{\partial x_p}=
\left[\overline{u},\frac{\partial}{\partial x_p}\right].\] Hence,
 the relations
\[\frac{\partial}{\partial x_p}+\overline{b_p}=\frac{\partial}{\partial x_p}+\psi(-1 \otimes l_p)=
\frac{\partial}{\partial x_p}+\left(E+\frac{1}{2!}(\ad
\overline{u})+\frac{1}{3!}(\ad\overline{u})^2+ \ldots\right)(-1
\otimes l_p)=\]
\[\frac{\partial}{\partial x_p}+(-1 \otimes l_p)+\frac{1}{2!}[\overline{u},-1 \otimes l_p]+
\frac{1}{3!}[\overline{u},[\overline{u},-1 \otimes l_p]]+
\ldots=\]
\[=\frac{\partial}{\partial x_p}+\left[\overline{u},\frac{\partial}{\partial x_p}\right]+
\frac{1}{2!}\left[\overline{u},\left[\overline{u},\frac{\partial}{\partial
x_p}\right]\right]+\ldots= e^{\ad
\overline{u}}\left(\frac{\partial}{\partial x_p}\right).\] hold
for $p=k+1, \ldots n$.
 Since $\overline{d_p}=0$ for $p=k+1, \ldots n$, we get
\[\frac{\partial}{\partial x_p}+\overline{b_p}=e^{\ad \overline{u}}\left(\frac{\partial}{\partial x_p}+\overline{d_p}\right).\]
Thus, $$\left[\frac{\partial}{\partial
x_p}+\overline{b_p},\frac{\partial}{\partial
x_q}+\overline{b_q}\right]= \left[e^{\ad
\overline{u}}\left(\frac{\partial}{\partial
x_p}+\overline{d_p}\right), e^{\ad
\overline{u}}\left(\frac{\partial}{\partial
x_q}+\overline{d_q}\right)\right] =$$ $$ =e^{\ad
\overline{u}}\left[\frac{\partial}{\partial
x_p}+\overline{d_p},\frac{\partial}{\partial
x_q}+\overline{d_q}\right]=0.$$ Therefore, by Lemma \ref{main}
there exists a basic subalgebra of $W_n{\mathbb(K)}$ which is
isomorphic to $L$.

\end{proof}

\section {The main theorem}
\begin{lemma}~\label{gen}
Let $L$ be $n$-dimensional Lie algebra over an algebraically
closed field $\mathbb K$ of zero characteristic. Let $L=L_1+L_2$,
where $L_1$, $L_2$ are  subalgebras of $L$ such  that  $L_1\cap
L_2= \lbrace 0 \rbrace$, $\dim L_1=m<n$. Assume that the
subalgebra $L_2$ acts nilpotently (by means of multiplication) on
$L$, that is $(\ad L_2)^k (L)=0$ for some $k.$ If there exists a
basic subalgebra $\overline{L_1}$ of $W_m(\mathbb{K})$ such that
$\overline{L_1}$ is  isomorphic to $L_1,$ then there exists a
basic subalgebra of $W_n(\mathbb{K})$ such that $\overline{L}$ is
isomorphic to the Lie algebra $L$.
\end{lemma}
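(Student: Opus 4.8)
The plan is to mimic the structure of the proofs of Theorem~\ref{nil}(1) and Proposition~\ref{sol}, working inside the Lie algebra $\widehat L = A + R\otimes_{\mathbb K}L$ and producing elements $\overline{b_1},\dots,\overline{b_n}$ of $R\otimes_{\mathbb K}L$ whose coordinate matrix $(b_{pi})$ has determinant in $\mathbb K^\star$ and which satisfy the commutation relations (\ref{main2}); by Remark~\ref{equiv} and Lemma~\ref{main}, inverting that matrix then yields the required basic subalgebra $\overline L \cong L$ of $W_n(\mathbb K)$. Here $R = \mathbb K[x_1,\dots,x_n]$, and I will split the variables into two blocks: $x_1,\dots,x_m$ ``belonging'' to $L_1$ and $x_{m+1},\dots,x_n$ ``belonging'' to $L_2$. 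Choose a basis $\{l_1,\dots,l_n\}$ of $L$ with $\{l_1,\dots,l_m\}$ a basis of $L_1$ and $\{l_{m+1},\dots,l_n\}$ a basis of $L_2$.

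First I would use the hypothesis: there is a basic subalgebra $\overline{L_1}\subset W_m(\mathbb K)$ isomorphic to $L_1$. Writing its basis as $D_1^{(1)},\dots,D_m^{(1)}$ with $\frac{\partial}{\partial x_i} = \sum_j b_{ij}^{(1)} D_j^{(1)}$ in $W_m(\mathbb K)$, I get elements $\overline{d_p} = \sum_{i=1}^m b_{pi}^{(1)}\otimes l_i \in R'\otimes_{\mathbb K}L_1$ (with $R' = \mathbb K[x_1,\dots,x_m]$) satisfying $\left[\frac{\partial}{\partial x_p}+\overline{d_p},\frac{\partial}{\partial x_q}+\overline{d_q}\right]=0$ for $p,q=1,\dots,m$ in $\widehat{L_1}$, and $\det(b_{pi}^{(1)})_{p,i=1}^m \in \mathbb K^\star$. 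View these inside $R\otimes_{\mathbb K}L$ (the extra variables just come along for free), and for $p = m+1,\dots,n$ tentatively set $\overline{d_p} = -1\otimes l_p = -\frac{\partial}{\partial x_p}\overline u$, where $\overline u = \sum_{i=m+1}^n x_i\otimes l_i$. The point of choosing $\overline{d_p}$ to depend only on $x_1,\dots,x_m$ for $p\le m$ is that then $\frac{\partial \overline{d_p}}{\partial x_q}=0$ for $q>m$, so the mixed commutators $\left[\frac{\partial}{\partial x_p}+\overline{d_p},\frac{\partial}{\partial x_q}+\overline{d_q}\right]$ with $p\le m < q$ reduce to $\left[\frac{\partial}{\partial x_p}+\overline{d_p}, -1\otimes l_q\right]$, which need not vanish — this mismatch is exactly what the nilpotent action of $L_2$ will let me fix.

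Next, exactly as in Proposition~\ref{sol}, I would apply the automorphism $\eta = e^{\ad\overline u}$ of $\widehat L$ (well defined since $\ad\overline u$ is nilpotent on $\widehat L$: $R\otimes_{\mathbb K}L_2$ acts nilpotently on $R\otimes_{\mathbb K}L$ by the hypothesis $(\ad L_2)^k(L)=0$, and on $A$ because $\frac{\partial\overline u}{\partial x_p}$ lies in $R\otimes L_2$ for $p>m$ and is $0$ for $p\le m$). Set $\overline{b_p} = \eta(\overline{d_p})$ for $p\le m$ and $\overline{b_p} = \psi(-1\otimes l_p)$ for $p>m$, where $\psi = \sum_{i\ge1}\frac{1}{i!}(\ad\overline u)^{i-1}$; the computation in Proposition~\ref{sol} showing $\frac{\partial}{\partial x_p}+\overline{b_p} = \eta\left(\frac{\partial}{\partial x_p}+\overline{d_p}\right)$ goes through verbatim (using $[\overline u,\frac{\partial}{\partial x_p}]=0$ for $p\le m$ and $-1\otimes l_p = [\overline u,\frac{\partial}{\partial x_p}]$ for $p>m$). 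Hence all the brackets $\left[\frac{\partial}{\partial x_p}+\overline{b_p},\frac{\partial}{\partial x_q}+\overline{b_q}\right] = \eta\left[\frac{\partial}{\partial x_p}+\overline{d_p},\frac{\partial}{\partial x_q}+\overline{d_q}\right]$ vanish at once, since $\eta$ is an automorphism and the bracketed expression is $0$: for $p,q\le m$ by the basic property of $\overline{L_1}$, for $p,q>m$ because $\overline{d_p}=-\frac{\partial\overline u}{\partial x_p}$ makes $\frac{\partial}{\partial x_p}+\overline{d_p} = e^{-\ad\overline u}(\frac{\partial}{\partial x_p})$, and for the mixed case because $[\frac{\partial}{\partial x_p},\frac{\partial}{\partial x_q}]=0$ and one can rewrite $\frac{\partial}{\partial x_p}+\overline{d_p} = e^{-\ad\overline u}(\frac{\partial}{\partial x_p})$ uniformly in $p$ after absorbing the $p\le m$ contributions, which commute with $\overline u$.

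Finally, for the determinant condition: the elements $\eta(\overline{d_p})$, $p\le m$, together with $\eta(-1\otimes l_p)$, $p>m$, form a basis of the $R$-module $R\otimes_{\mathbb K}L$ since $\eta$ is an $R$-module automorphism and the $\overline{d_p}$ ($p\le m$) together with $-1\otimes l_p$ ($p>m$) already form one — the former because $(b_{pi}^{(1)})$ is invertible over $R'$ hence over $R$. Replacing $\eta(-1\otimes l_p)$ by $\psi(-1\otimes l_p)$ for $p>m$ does not change the span (both are bases of the submodule $R\otimes_{\mathbb K}L_2$, since $\ad\overline u$ preserves that submodule), so $(b_{pi})_{p,i=1}^n$ is invertible, i.e.\ $\det(b_{pi})\in\mathbb K^\star$. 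Then Lemma~\ref{main} (its second part), applied with $c_{ij}^k$ the structure constants of $L$ in $\{l_1,\dots,l_n\}$ — note relations (\ref{mainequ}) hold by Remark~\ref{equiv} since (\ref{main2}) holds — shows that the derivations $D_i = \sum_j a_{ij}\frac{\partial}{\partial x_j}$, with $(a_{ij}) = (b_{ij})^{-1}$, satisfy $[D_i,D_j]=\sum_k c_{ij}^k D_k$; hence $\mathbb K\langle D_1,\dots,D_n\rangle$ is a basic subalgebra of $W_n(\mathbb K)$ isomorphic to $L$. I expect the main obstacle to be the bookkeeping in verifying that $\ad\overline u$ really is nilpotent on all of $\widehat L$ (not just on $R\otimes_{\mathbb K}L$) and that the mixed commutators $[\,\cdot\,,\,\cdot\,]$ with one index $\le m$ and one index $>m$ genuinely vanish rather than merely being handled formally — this is where the precise placement of variables and the choice to make $\overline{d_p}$ ($p\le m$) independent of $x_{m+1},\dots,x_n$ does the real work.
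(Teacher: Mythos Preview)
Your approach is essentially the paper's: lift the basic subalgebra $\overline{L_1}$ to elements $\overline{d_p}\in R_1\otimes_{\mathbb K} L_1$ ($p\le m$) satisfying (\ref{main2}), extend to all $p$, then conjugate by $\theta=e^{\ad\overline u}$ with $\overline u=\sum_{i>m}x_i\otimes l_i$ and check the resulting $\overline{b_p}$ form an $R$-basis. The determinant argument in your last paragraph is exactly the paper's.

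The one genuine slip is your ``tentative'' choice $\overline{d_p}=-1\otimes l_p$ for $p>m$, which is what produces the muddle you worry about at the end. With that choice the claim $\frac{\partial}{\partial x_p}+\overline{d_p}=e^{-\ad\overline u}\bigl(\frac{\partial}{\partial x_p}\bigr)$ is false: already the first-order term of $e^{-\ad\overline u}\bigl(\frac{\partial}{\partial x_p}\bigr)$ is $+1\otimes l_p$, not $-1\otimes l_p$, and higher terms survive unless $L_2$ is abelian. Likewise the mixed brackets $\bigl[\frac{\partial}{\partial x_p}+\overline{d_p},\frac{\partial}{\partial x_q}+\overline{d_q}\bigr]$ with $p\le m<q$ pick up $[\overline{d_p},1\otimes l_q]\in R\otimes[L_1,L_2]$, which need not vanish. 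So your identity $\frac{\partial}{\partial x_p}+\overline{b_p}=\eta\bigl(\frac{\partial}{\partial x_p}+\overline{d_p}\bigr)$ cannot hold with that $\overline{d_p}$.

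The fix is the choice made in the paper (and in Proposition~\ref{sol}, which you cite for the ``verbatim'' computation): set $\overline{d_p}=0$ for $p>m$. Then $\bigl[\frac{\partial}{\partial x_p}+\overline{d_p},\frac{\partial}{\partial x_q}+\overline{d_q}\bigr]=0$ in all cases for free: for $p,q\le m$ from $\overline{L_1}$; for $p,q>m$ trivially; and for $p\le m<q$ because $\overline{d_p}\in R_1\otimes L_1$ does not involve $x_q$ while $\overline{d_q}=0$. The computation from Proposition~\ref{sol} that you invoke already uses $\overline{d_p}=0$, so once you make this change your argument closes and matches the paper's proof.
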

\begin{proof}
Take a basis $\{ l_1,\ldots ,l_m\}$ of the subalgebra  $L_1$ and a
basis $\{ l_{m+1},\ldots ,l_n\}$ of the subalgebra $L_2$. Then the
 elements $l_1,\ldots ,l_n$ form a basis of $L$.
 Denote by $R_{1}$ the subring $\mathbb K[x_{1},
\ldots , x_{m}]$  of the polynomial ring $R=\mathbb K[x_{1},
\ldots , x_{m}, x_{m+1}, \ldots , x_{n}]$. Since  there exists a
basic subalgebra $\overline{L_1}$ of $W_m(\mathbb{K})$ such that
$\overline{L_1}$ is  isomorphic to $L_{1}$,
 by Lemma (\ref{main}) there exist elements
$\overline{d_p}=\sum_{i=1}^{m} d_{pi}\otimes l_{i} \in
R_{1}\otimes _{\mathbb K}L_{1}, \ p=1, \ldots , m$
  such that for the polynomials $d_{pi}$  the relation (\ref{mainequ}) holds with
  the structure constants of the algebra $L_{1}$ and
  $\det (d_{pi})_{p,i=1}^m \in \mathbb K^\star$.
  As  $L_1$ is a subalgebra of $L$,
  we have a natural embedding of the Lie algebra $R_1 \otimes L_1$ into $R \otimes L$
  (an element $\overline{d_p}=\sum_{i=1}^{m}
d_{pi}\otimes l_{i} \in R_{1}\otimes _{\mathbb K}L_{1}$  maps to
the element $\overline{d_p}=\sum_{i=1}^{n} d_{pi}\otimes l_{i} \in
R\otimes _{\mathbb K}L$, $d_{pi}=0$ for $i=m+1, \ldots n$). Put
$\overline{d_p}=0$ for $p=m+1, \ldots n$. Using  Remark
\ref{equiv}, it is easy to see  that the following relations hold
in  $\widehat{L}$:
\begin{equation}\label{maingen}
\left[\frac{\partial}{\partial
x_p}+\overline{d_p},\frac{\partial}{\partial
x_q}+\overline{d_q}\right]=0, \  p,q=1,\ldots n.
\end{equation}

Put $\overline{b}=\sum_{i=m+1}^n x_i \otimes l_i \in R \otimes
L_2$. By the assumption for the subalgebra $L_2$, the derivation
$\ad \overline{b}$ of the Lie algebra $R \otimes L$ is nilpotent
and therefore the automorphism $\theta =\exp(\ad \overline{b})$
 of the Lie algebra $R \otimes L$  (and $\widehat{L}$) is well defined.
 Denote $\overline{b_p}=-\frac{\partial}{\partial x_p}+\theta\left(\frac{\partial}{\partial x_p}+\overline{d_p}\right)$, $p=1 \ldots n$.
Then $\frac{\partial}{\partial x_p}+
 \overline{b_p}=\theta\left(\frac{\partial}{\partial x_p}+\overline{d_p}\right)$,
and  the following equalities  hold:
 \[\left[\frac{\partial}{\partial x_p}+\overline{b_p},\frac{\partial}{\partial x_q}+\overline{b_q}\right]=
 \left[\theta\left(\frac{\partial}{\partial x_p}+\overline{d_p}\right),
 \theta\left(\frac{\partial}{\partial x_q}+\overline{d_q}\right)\right]=\]
 \[=\theta \left(\left[\frac{\partial}{\partial x_p}+\overline{d_p},
 \frac{\partial}{\partial x_q}+\overline{d_q}\right]\right)=0, \ p,q=1 \ldots n.\]
 Let us show that the set of the elements $\overline{b_p}$, $p=1, \ldots n$ is a basis
 of the free $R$-module $R \otimes L$.
 It holds
  \[\overline{b_p}=-\frac{\partial}{\partial x_p}+\theta\left(
  \frac{\partial}{\partial x_p}+\overline{d_p}\right)=
  -\frac{\partial}{\partial x_p}+\theta\left(\frac{\partial}{\partial x_p}\right)+\theta\left(
  \overline{d_p}\right) \ p=1, \ldots m\]
  and then
  \[\theta\left(\frac{\partial}{\partial x_p}\right)=
  \left(E+\ad \overline{b}+\frac{1}{2!}(\ad \overline{b})^2 + \ldots\right)\left(\frac{\partial}{\partial x_p} \right)=\]
  \[=\frac{\partial}{\partial x_p}+\left[\overline{b},\frac{\partial}{\partial x_p}\right]
  +\left[\overline{b},\left[\overline{b},\frac{\partial}{\partial x_p}\right]\right]+ \ldots=\frac{\partial}{\partial x_p},\]
  since $\left[\overline{b},\frac{\partial}{\partial x_p}\right]=-\frac{
  \partial \overline{b}}{\partial x_p}=0$ for  $p=1, \ldots ,m$ .
  Now consider the elements $\overline {b_p}$ for
   $p=m+1, \ldots n$. In this case $\overline {d_p}=0$. Therefore
  \[\overline {b_p}=-\frac{\partial}{\partial x_p}+\theta\left(\frac{\partial}{\partial x_p}+\overline{d_p}\right)=
  -\frac{\partial}{\partial x_p}+\left(E+\ad \overline{b}+\frac{1}{2!}(\ad \overline{b})^2 + \ldots\right)\left(\frac{\partial}{\partial x_p} \right)=\]
  \[=-\frac{\partial}{\partial x_p}+\frac{\partial}{\partial x_p}+\left[\overline{b},\frac{\partial}{\partial x_p}\right]
  +\left[\overline{b},\left[\overline{b},\frac{\partial}{\partial x_p}\right]\right]+ \ldots=\]
  \[=-\frac{\partial b}{\partial x_p}-\frac{1}{2!}\left[\overline{b},\frac{\partial \overline{b}}{\partial x_p}\right]
  -\frac{1}{3!}\left[\overline{b},\left[\overline{b},\frac{\partial \overline{b}}{\partial x_p}\right]\right]=
  \sum_{i=1}^{\infty}{\frac{1}{i!}(\ad \overline{b})^{i-1}\left(-\frac{\partial \overline{b}}{\partial x_p}\right)}.\]
  (Because of nilpotency of $\ad \overline{b}$, the number of nonzero summands in this
  series is finite).

  Denote $\phi=\sum_{i=1}^{\infty}{\frac{1}{i!}(\ad \overline{b})^{i-1}}$.
  It is easy to see that $\phi$ is an automorphism of the
  free $R$-module $R \otimes L$ (see Lemma \ref{properties}).
   Since $\overline{b}=\sum_{i=m+1}^n x_i \otimes l_i \in R \otimes L_2$,   the $R$-module
  $ R \otimes L_2$ is invariant under action of $\phi$.

  The set of  elements $\overline{d_p}$, $p=1, \ldots m$
   and $-\frac{\partial \overline{b}}{\partial x_p}=-1 \otimes l_p$, \ $p=m+1, \ldots n$
 is a basis of the free $R$-module $ R \otimes L$
 (because this module is the direct sum of the
 $R$-modules $ R \otimes L_1$ and $ R \otimes L_2$). Then,  the
  elements $\theta(\overline{d_p})$
 , $p=1, \ldots m$ and $\theta\left(-\frac{\partial \overline{b}}{\partial x_p}\right)$, $p=m+1, \ldots n$
form a basis of $ R \otimes L$. Since $\phi$ is an automorphism of
the free $R$-module
 $R \otimes L_2$, the set of elements
 $\phi\left(-\frac{\partial \overline{b}}{\partial x_p}\right)$, $p=m+1, \ldots n$ is a basis of this submodule
 (note that the  set of  elements $\theta\left(-\frac{\partial \overline{b}}{\partial x_p}\right)$, $p=m+1,
 \ldots n$  is also a basis of $ R \otimes L_2$).
 It follows from this  that   the elements $\theta(\overline{d_p})$
 , $p=1, \ldots m$ and $\phi\left(-\frac{\partial \overline{b}}{\partial x_p}\right)$, $p=m+1, \ldots n$
 together form a basis of the free $R$-module $ R \otimes L$. Then, using the equalities
 $\overline{b_p}=\theta(\overline{d_p})$
 , $p=1, \ldots m$ and $\overline{b_p}=\phi\left(-\frac{\partial
 \overline{b}}{\partial x_p}\right)$, $p=m+1, \ldots n$,
 we see that $\lbrace \overline{b_p}\rbrace$
 , $p=1, \ldots n$ is a basis of the free $R$-module $ R \otimes
 L$, and therefore  $\det(b_{pi})_{p,i=1}^n \in \mathbb{K}^{\star}$. Hence, by Lemma \ref{main}
there exists a basic subalgebra $\widehat{L}$ of $W_n(\mathbb{K})$
such that $\widehat{L}$ is isomorphic to the Lie algebra $L$.

\end{proof}

 Now we can prove the main theorem of this paper.
\begin{theorem}\label{mainth}
  Let $L$ be any  Lie algebra  $L$ over $\mathbb{K}$ of dimension $n\geq 1$. Then  there
  exists   a basic subalgebra $\overline L$ of $W_n(\mathbb{K})$ such that $\overline{L}$ is  isomorphic to $L$.
\end{theorem}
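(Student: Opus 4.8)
The plan is to proceed by induction on $n=\dim_{\mathbb K}L$, reducing a general $L$ to a proper subalgebra by peeling off a nilpotently acting piece and then invoking Lemma~\ref{gen}; the solvable case, which contains the base of the induction, is already handled by Proposition~\ref{sol}. So I may assume that $L$ is not solvable and that the assertion is known for all Lie algebras of dimension smaller than $n$. Fix a Levi decomposition $L=S+\operatorname{Rad}(L)$, where $S$ is a nonzero semisimple Levi subalgebra and $\operatorname{Rad}(L)$ is the solvable radical (an ideal of $L$, with $S\cap\operatorname{Rad}(L)=\{0\}$), and fix a triangular decomposition $S=N_-\oplus H\oplus N_+$; since $S\neq\{0\}$ we have $N_+\neq\{0\}$.

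Now set $L_2:=N_+$ and $L_1:=(N_-\oplus H)+\operatorname{Rad}(L)$. Then $L_2$ is a subalgebra of $S$, and $L_1$ is a subalgebra of $L$: the Borel subalgebra $N_-\oplus H$ of $S$ is closed under the bracket, $\operatorname{Rad}(L)$ is an ideal, and $[N_-\oplus H,\operatorname{Rad}(L)]\subseteq\operatorname{Rad}(L)$. Using $S\cap\operatorname{Rad}(L)=\{0\}$ one gets $L=S+\operatorname{Rad}(L)=L_1+L_2$, $L_1\cap L_2\subseteq N_+\cap(N_-\oplus H)=\{0\}$, and $m:=\dim L_1=n-\dim N_+<n$. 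Next, viewing $L$ as a finite dimensional module over the semisimple subalgebra $S$ via the restricted adjoint action, Lemma~\ref{decom}(2) shows that $N_+=L_2$ acts nilpotently on $L$, i.e. $(\ad L_2)^k(L)=0$ for some $k$.

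By the induction hypothesis, applied to $L_1$ (of dimension $m<n$), there is a basic subalgebra $\overline{L_1}$ of $W_m(\mathbb K)$ isomorphic to $L_1$. Thus all hypotheses of Lemma~\ref{gen} hold for the decomposition $L=L_1+L_2$, and that lemma yields a basic subalgebra of $W_n(\mathbb K)$ isomorphic to $L$, which closes the induction. The main point to get right is the choice of the summand $L_2$: it must simultaneously be a subalgebra, admit a subalgebra complement $L_1$ of strictly smaller dimension, and act nilpotently on all of $L$ (not merely on $S$). The choice $L_2=N_+$ works precisely because a Borel subalgebra of the Levi part together with the radical is again a subalgebra, and because, by Lemma~\ref{decom}(2), the nilpotent parts of a semisimple Lie algebra act nilpotently on every finite dimensional module over it; once this is in place, the remaining verifications — the direct sum decomposition, the dimension count, and the application of Lemma~\ref{gen} — are routine. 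One could equally well take $L_2=N_-$.
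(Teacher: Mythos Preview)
Your proof is correct and follows essentially the same route as the paper: split off a nilpotent part of the Levi factor as $L_2$ and apply Lemma~\ref{gen} to the complementary subalgebra $L_1=(\text{opposite Borel})+\operatorname{Rad}(L)$, using Lemma~\ref{decom}(2) to check the nilpotent-action hypothesis. The only differences are cosmetic: the paper chooses $L_2=N_-$ rather than $N_+$, and instead of an induction it simply observes that $L_1$ is solvable (an extension of a Borel by the radical) and invokes Proposition~\ref{sol} directly --- which is exactly what your induction does, since your $L_1$ is solvable and the recursion therefore terminates after a single step.
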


\begin{proof}
By Proposition \ref{sol} we may assume that $L$ is not solvable.
Let $S=S(L)$ be the solvable radical of $L$, and $L=L_0
\rightthreetimes S$
 be the Levi decomposition of $L$, where $L_0$ is a semisimple subalgebra of $L$.
Let $H_{0}\subseteq L_{0}$ be a Cartan subalgebra of $L_{0}$ and
let  $L=N_{-}\oplus H_{0}\oplus N_{+}$ be the corresponding
triangular decomposition for some choice of simple roots.  Denote
by $B_0$ the Borel subalgebra $B_0=H_0+N_{+}$ of $L_0$. Then the
subalgebra $L_1=S+B_0$ is solvable, therefore by Proposition
\ref{sol} there exists a basic subalgebra of $W_k(\mathbb{K})$,
which is isomorphic to $\overline{L_1}$, where $k=\dim L_1$. Since
$L=L_1+N_{-}$, $L_1 \cap N_{-}=\lbrace 0 \rbrace$, and the
subalgebra $N_{-}$ acts nilpotently (by multiplication) on the
$L_0$-module $L$  (see Lemma \ref{decom}), there exists  by Lemma
~\ref{gen} a basic subalgebra $\overline{L}$ of
     $W_n(\mathbb{K})$ such that $\overline{L}$ is  isomorphic to the Lie algebra $L$.
\end{proof}
\begin{example}
Let $L=sl_{2}(\mathbb K )$ and $\{ E, H, F\}$ be its standard
basis over $\mathbb K.$  We shall construct a basic subalgebra of
$W_{3}(\mathbb K)$, which is isomorphic to $L.$ As the Cartan
subalgebra $\langle H \rangle$ of $L$ is one-dimensional, the
basic subalgebra $\langle -\frac{\partial}{\partial x_{1}}\rangle$
of $W_{1}(\mathbb K)$ is isomorphic to $\langle H \rangle$. To the
element $H\in L$ corresponds the element $1\otimes H$  in the Lie
algebra $\widehat L$ (see the Remark \ref{equiv}). The subalgebra
$N_{+}$ has obviously generators $E$ and $\left [H,E\right ]=2E$.
Put $w=x_2\otimes E$. Therefore, we have
\[e^{\ad (x_2 \otimes E)}\left(\frac{\partial }{\partial x_1}-1 \otimes H\right)
=\frac{\partial }{\partial x_1}+2x_2\otimes E - 1 \otimes
H\]
\[e^{\ad x_2 \otimes E}\left(\frac{\partial }{\partial x_2}\right)=\frac{\partial
}{\partial x_2}-1 \otimes E.\] Further, $N_-$ has  generators $F$
and $\left[H,F\right]=-2F$, $\left[E,F\right]=H$. Analogously we
obtain
\[e^{\ad (x_3 \otimes F)}\left(\frac{\partial }{\partial x_1}+2x_2\otimes E - 1 \otimes H\right)=\]
\[=\frac{\partial }{\partial x_1}+2x_2 \otimes E - (1+2x_2x_3)\otimes H-(2x_3+2x_2x_3^2)\otimes F\]
\[e^{\ad x_3 \otimes F}\left(\frac{\partial }{\partial x_2}-1 \otimes E\right)
=\frac{\partial }{\partial x_2}-1 \otimes E+x_3 \otimes
H+x_3^2 \otimes F\]
\[e^{\ad x_3 \otimes F}\left(\frac{\partial }{\partial x_3}\right)=\frac{\partial }{\partial x_3}-1 \otimes F.\]
 Then, using the inverse matrix
 to the matrix of these elements, we
 get a basis of $W_3(\mathbb{K})$. The linear span of this basis over $\mathbb K$
  is isomorphic to the Lie algebra $L=sl_{2}(\mathbb K )$:
 \[E=-x_3 \frac{\partial}{\partial x_1}+ \left(1+2x_2x_3\right)\frac{\partial}{\partial
 x_2}-x_3^2 \frac {\partial}{\partial x_3}\]
 \[H=\frac{\partial}{\partial x_1}-2x_2 \frac{\partial}{\partial x_2}+2x_3
 \frac{\partial}{\partial x_3}, \ \   F=\frac{\partial}{\partial x_3}. \]
\end{example}

\begin{remark}

Since the set of all linear homogeneous derivations of
$W_n(\mathbb{K})$ form a Lie algebra, which is isomorphic to
$gl_{n}(\mathbb K)$, there are embeddings of any $n$-dimensional
Lie algebra $L$ without center into $W_n(\mathbb{K})$.
 But for the image $\overline L$ of $L$ by such
an embedding in $W_n(\mathbb{K})$ and a basis $\{ D_{1}, \ldots ,
D_{n}\}$ of $\overline L$  such that $D_{i}=\sum
_{j=1}^{n}f_{ij}\frac{\partial}{\partial x_j}$, the determinant
$\det (f_{ij})$ is not a constant. Therefore $\overline L$ is not
a basic Lie subalgebra of $W_n(\mathbb{K}).$
\end{remark}

The author is grateful to Professor A.P.Petravchuk for suggesting
the problem and for constant attention to this work.

\end{document}